\def\cC{\mathcal{C}}
\def\cM{\mathcal{M}}
\def\cO{\mathcal{O}}
\def\bP{\mathbb{P}}
\def\bQ{\mathbb{Q}}
\def\bZ{\mathbb{Z}}
\def\barM{\overline{\cM}}
\DeclareMathOperator{\ev}{ev}
\DeclareMathOperator{\Pic}{Pic}
\DeclareMathOperator{\Spec}{Spec}
\newtheorem{thm}{Theorem}
\newtheorem{lem}[thm]{Lemma}
\newtheorem{cor}[thm]{Corollary}
\newtheorem{prop}[thm]{Proposition}
\newtheorem{speculation}[thm]{Speculation}
\newtheorem{question}[thm]{Question}
\newtheorem{rem}[thm]{Remark}
\newtheorem{ex}[thm]{Example}
\newtheorem{defn}[thm]{Definition}
\newcommand{\Tev}{{\mathsf{Tev}}}
\newcommand{\vTev}{{\mathsf{vTev}}}
\g@addto@macro\bfseries{\boldmath} 
\begin{document}

\title{Enumerativity of virtual Tevelev degrees}

\author{Carl Lian}

\address{Humboldt-Universit\"at zu Berlin, Institut f\"ur Mathematik,  Unter den Linden 6
\hfill \newline\texttt{}
 \indent 10099 Berlin, Germany} \email{{\tt liancarl@hu-berlin.de}}

\author{Rahul Pandharipande}

\address{ETH Z\"urich, Departement Mathematik,  R\"amisstrasse 101
\hfill \newline\texttt{}
 \indent 8044 Z\"urich, Switzerland} \email{{\tt rahul@math.ethz.ch}}
\date{March 2023}

\begin{abstract}
  Tevelev degrees in Gromov-Witten theory are defined whenever there are virtually a
  finite
  number of genus $g$ maps of fixed complex structure in a given curve class $\beta$
  through $n$ general points
  of a target variety $X$. These virtual Tevelev degrees often have much simpler structure than
  general Gromov-Witten invariants. We explore here the question of
  the enumerativity of such counts in the asymptotic range for large
  curve class $\beta$. A simple speculation is that for all Fano $X$, the virtual
  Tevelev degrees are enumerative for sufficiently large $\beta$. We prove the claim
  for all homogeneous varieties and all hypersurfaces of sufficiently low degree (compared
  to dimension). As an application, we prove a new result on the existence of very free curves of low degree on hypersurfaces in positive characteristic.
\end{abstract}

\maketitle

\setcounter{tocdepth}{1}
\tableofcontents

\section{Tevelev degrees}
\subsection{Definitions}
Let $X$ be a nonsingular, projective, complex algebraic variety
of dimension $r$,  and let $\beta\in H_2(X,\bZ)$ be a class satisfying
\begin{equation}\label{ppp}
  \int_\beta c_1(T_X) > 0\, .
  \end{equation}
  If $X$ is Fano, the positivity \eqref{ppp} is always satisfied
  for classes of curves. Fano varieties will
  be our main interest here.

  Let $g\geq 0$ and $n\geq 0$ be in the stable range $2g-2+n>0$ so that the moduli space of stable
  curves $\barM_{g,n}$ is well-defined.
  The moduli space of stable maps
  $\barM_{g,n}(X,\beta)$ has virtual dimension equal to the dimension of $\barM_{g,n}\times X^n$
  if and only if
  \begin{equation}\label{ddd}
    \int_\beta c_1(T_X)=r(n+g-1)\, .
  \end{equation}
  If the dimension constraint \eqref{ddd} holds, we expect to find a 
finite number  of maps  from a fixed curve
$(C,p_1,\ldots,p_n)$ of genus $g$  to $X$  of curve class $\beta$
  where the $p_i$ are incident to general points of $X$.
  Tevelev degrees in Gromov-Witten theory are defined to be the
  corresponding virtual count.

\begin{defn}
Let $g\geq 0$, $n\geq 0$, and $\beta\in H_2(X,\mathbb{Z})$ satisfy $2g-2+n>0$ and the dimension constraint \eqref{ddd}.
  Let $$\tau:\barM_{g,n}(X,\beta)\to\barM_{g,n}\times X^n$$ be the canonical morphism.
The \textbf{virtual Tevelev degree} $\vTev^{X}_{g,\beta,n}\in\bQ$ is defined by
$$\tau_{*}([\barM_{g,n}(X,\beta)]^{vir}) = \vTev^{X}_{g,\beta,n} \cdot [\barM_{g,n}\times X^n] \in A^0(\barM_{g,n}\times X^n)\, .$$
\end{defn}
\noindent Here, $[\,]^{vir}$ and $[\,]$ denote the virtual and usual fundamental classes, respectively.
If the dimension constraint \eqref{ddd} fails, we define $\vTev^{X}_{g,\beta,n}$ to vanish.

We say that the virtual Tevelev count for $g$ and $\beta$ is \textbf{well-posed} if
$$n(g,\beta)= 1-g +\frac{1}{r}\int_\beta c_1(T_X)\, $$
is a non-negative integer satisfying
$2g-2+n(g,\beta)>0$.
We will use the notation
$$\vTev^{X}_{g,\beta}= \vTev^{X}_{g,\beta,n(g,\beta)}\,   $$
in the well-posed case.

Let $(C,p_1,\ldots,p_n)$ be a fixed {\em general} nonsingular curve of genus $g$ with $n$ distinct points.
Let $x_1,\ldots, x_n\in X$ be $n$ general points. 

\begin{defn}
If the virtual Tevelev count for $g$ and $\beta$ is well-posed and
the actual count of maps 
$$f: (C,p_1,\ldots,p_n) \rightarrow X$$
  in class $\beta$ satisfying $f(p_i)=x_i$ is finite
  and transverse, we define the \textbf{geometric Tevelev degree} $\Tev^{X}_{g,\beta,n}\in\bZ$ to be the set-theoretic count.
  \end{defn}

Equivalently, $\Tev^{X}_{g,\beta,n}$ is defined by the set-theoretic degree of the morphism 
$$\tau_\cM:\cM_{g,n}(X,\beta)\to\cM_{g,n}\times X^n\ $$
computed along a general fibre (which is required to be everywhere
transverse).
Transversality here is the condition that $d\tau_\cM$ is
an isomorphism on Zariski tangent spaces.
The definition ignores all components of $\cM_{g,n}(X,\beta)$ which fail to dominate $\cM_{g,n}\times X^n$.

The virtual Tevelev degree is  \textbf{enumerative} if the geometric
Tevelev degree is well-defined and 
$$\vTev^{X}_{g,\beta,n}=\Tev^{X}_{g,\beta,n}\, .$$

\begin{rem}\em{
The virtual Tevelev count is never well-posed for constant maps: if
$\int_\beta c_1(T_X)=0$
and  $n(g,0)\geq 0$, then $g$ must be 0 or 1 with
$$2g-2+n(g,0) < 0\, .$$
The geometric Tevelev degree therefore requires a nonzero
class $\beta$. } 

\end{rem}

\begin{rem}\em{
Stable maps with automorphisms {\em never} occur
in a general transverse fiber of $\tau_\cM$ when the
geometric Tevelev degree is well-defined. 
Such automorphisms could only occur in the cases
 \begin{equation}\label{cc12}
 (g,n)=(1,1)\  \text{or} \ (2,0)\, .
 \end{equation}
 In both cases \eqref{cc12}, a stable map to $X$ in a general fiber of $\tau_\cM$ with a nontrivial automorphism must
 factor through a map to $\bP^1$. Using the infinite automorphism group of $\bP^1$, we see the finiteness
 condition for the geometric Tevelev count is violated.}
\end{rem}

\subsection{Calculations of virtual Tevelev degrees}
  While Gromov-Witten invariants are in general rarely enumerative (especially
  in higher genus) and complicated to compute, the situation is much better for virtual Tevelev degrees.
  
  \vspace{8pt}
  \noindent $\bullet$ The projective space case $X=\bP^r$ has a particularly simple
  answer:
 \begin{equation} \label{pppp}
 \vTev^{\bP^r}_{g,d}=(r+1)^g
 \end{equation}
 whenever the virtual Tevelev count is well-posed \cite{Bertram94, BDW,
   SiebertTian97}.

  \vspace{8pt}
  \noindent $\bullet$ For low degree hypersurfaces, a similar result is true.
  \begin{thm}\cite{bp}\label{virtualtev_hypersurface} Let
    $X_e\subset \mathbb{P}^{r+1}$ be a nonsingular hypersurface of degree
    $e\geq 3$ and dimension $r\geq 2e-3$. We 
    index curve classes of $X_e$ 
    by their associated degree $d$ in $\mathbb{P}^{r+1}$.
    Then,
    $$\vTev^{X_e}_{g,d} =  ((e-1)!)^{n(g,d)} \cdot (r+2-e)^g\cdot  e^{(d-n(g,d))e-g+1}$$
       whenever the virtual Tevelev count is well-posed.
   \end{thm}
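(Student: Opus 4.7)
The plan is to deduce the formula by reducing to the projective space case $\vTev^{\bP^{r+1}}_{g,d}=(r+2)^g$ via the quantum Lefschetz principle. Let $j\colon\barM_{g,n}(X_e,d)\hookrightarrow\barM_{g,n}(\bP^{r+1},d)$ be the natural closed embedding and let $\cE:=R\pi_*f^*\cO(e)$, where $\pi\colon\cC\to\barM_{g,n}(\bP^{r+1},d)$ is the universal curve and $f$ the universal map. The defining section of $X_e\subset\bP^{r+1}$ pulls back to a section of $\cE$ whose vanishing locus is exactly $\barM_{g,n}(X_e,d)$. A Riemann--Roch computation shows the virtual rank of $\cE$ is $ed+1-g$, which matches the virtual codimension of $j$ precisely when the dimension condition \eqref{ddd} for $X_e$ holds; the hypothesis $r\geq 2e-3$ is the positivity input ensuring that the quantum Lefschetz identity
\[
j_*[\barM_{g,n}(X_e,d)]^{vir} \;=\; e(\cE)\cap [\barM_{g,n}(\bP^{r+1},d)]^{vir}
\]
holds with $e(\cE)$ interpreted in the K-theoretic sense (so the contribution of $R^1\pi_*f^*\cO(e)$ in higher genus is handled correctly).

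Next, I would translate the Tevelev integral for $X_e$ into one on $\bP^{r+1}$. Using $[X_e]=eH$, the relation $(H|_{X_e})^r=e\,[\mathrm{pt}]_{X_e}$ in $A^*(X_e)$, the projection formula, and the short exact sequence
\[
0\to f^*\cO(e)\bigl(-\textstyle\sum_{i=1}^n p_i\bigr) \to f^*\cO(e) \to \bigoplus_{i=1}^n \mathrm{ev}_i^*\cO(e)\to 0
\]
on the universal curve, I would split off the marked-point factor $\prod_i e(\mathrm{ev}_i^*\cO(e))=\prod_i eH_i$ to absorb the mismatch between $X_e$-point and $\bP^{r+1}$-point classes. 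After cancellation of an $e^n$ common to both sides, the identity to prove becomes
\[
\vTev^{X_e}_{g,d} \;=\; \int_{[\barM_{g,n}(\bP^{r+1},d)]^{vir}} e(\cE')\,\cdot\,\tau_{\bP^{r+1}}^*\bigl([\mathrm{pt}]_{\barM_{g,n}}\boxtimes H_1^{r+1}\cdots H_n^{r+1}\bigr),
\]
with $\cE':=R\pi_*f^*\cO(e)(-\sum_i p_i)$ of virtual rank $ed-n-g+1$.

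The final and main step is the evaluation of this integral. The pullback class $\tau_{\bP^{r+1}}^*([\mathrm{pt}]\boxtimes\prod H_i^{r+1})$ is the usual $\bP^{r+1}$-Tevelev point-constraint class, but since $n\neq n_{\bP^{r+1}}(g,d)$ in general, the generic Tevelev fiber is positive dimensional of dimension exactly the virtual rank of $\cE'$. The strategy is to degenerate the target curve $C$ to a comb of rational components and reduce to an explicit count on each virtual connected component of the fiber. The three factors in the formula should then emerge as follows. The factor $(r+2-e)^g$ records the drop in Fano index from $r+2$ to $r+2-e$ and arises from a genus-by-genus TQFT-like assembly analogous to the mechanism producing $(r+2)^g$ on the ambient space. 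The factor $e^{(d-n)e-g+1}$ is a residual top Chern class contribution controlled by a Riemann--Roch count of sections of the twisted line bundle $f^*\cO(e)(-\sum p_i)$ along the generic parameterized image curve. Finally, $((e-1)!)^n$ arises from a local analysis at each marked point, recording the $(e-1)!$ ways a branch of the map $f$ can be compatible with the $e$-fold contact condition defining $X_e$ at a general image point.

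The chief obstacle is this last step: extracting the explicit combinatorial factors from the residual integral in a rigorous way and handling the virtual obstruction contributions both in the ambient $\bP^{r+1}$ theory and on boundary strata of $\barM_{g,n}(X_e,d)$ where the source curve becomes reducible. The hypothesis $r\geq 2e-3$ enters critically by forcing all non-dominant strata --- those whose maps fail to separate marked points or factor through proper subvarieties --- to have virtual dimension strictly less than that of the main component, so that they do not contribute to the Tevelev pushforward.
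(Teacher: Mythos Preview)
This theorem is not proved in the present paper; it is quoted from \cite{bp}. As the paper remarks immediately after the statement, the method of \cite{bp} expresses virtual Tevelev degrees directly in terms of the small quantum cohomology ring $QH^*(X)$: a closed trace-type formula writes $\vTev^{X}_{g,\beta}$ in terms of the eigenvalues of quantum multiplication by the point class, and for Fano hypersurfaces with $r\ge 2e-3$ these eigenvalues are determined by the known presentation of the ambient part of $QH^*(X_e)$. The hypothesis $r\ge 2e-3$ enters there as a condition on the structure of $QH^*(X_e)$, not as a transversality or dimension bound on strata of the moduli space of maps.

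Your route, by contrast, has a genuine gap at the very first step. The identity
\[
j_*[\barM_{g,n}(X_e,d)]^{vir} \;=\; e(\cE)\cap [\barM_{g,n}(\bP^{r+1},d)]^{vir}
\]
is the quantum Lefschetz hyperplane principle; it holds in genus $0$ because $\cO(e)$ is convex there, but it is \emph{false} for $g\ge 1$. The twisted Gromov--Witten theory of $\bP^{r+1}$ by $\cO(e)$ and the genuine Gromov--Witten theory of $X_e$ differ in positive genus --- this is precisely the difficulty addressed, for instance, in \cite{ABPZ}. Interpreting $e(\cE)$ ``K-theoretically'' accounts for the non-vanishing of $R^1\pi_*f^*\cO(e)$ but does not repair the identity, and the bound $r\ge 2e-3$ does not restore convexity for $g\ge 1$. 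Your final paragraph also conflates virtual and geometric counts: $\vTev$ is defined as a pushforward of the virtual class, so boundary strata contribute regardless of their actual dimension --- ruling out such strata is relevant to the \emph{enumerativity} of $\vTev$ (the subject of Sections~2--4 of this paper), not to its computation. Finally, the attributions of the three factors in the closed formula are purely heuristic; in particular no ``$e$-fold contact condition'' is imposed at a marked point mapping to a general point of $X_e$, so the proposed source of the factor $((e-1)!)^n$ does not make sense as stated.
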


   \vspace{8pt}
   \noindent$\bullet$ The $e=2$ case of quadric hypersurfaces
   $Q^r\subset \bP^{r+1}$ 
   takes
   a special form. Let
   $$\delta= \begin{cases} {\text{1 \ \ if $r$ is odd}}, \\
   {\text{2 \ \ if $r$ is even.}}
\end{cases}$$
\begin{thm}\cite{bp}\label{quadric} 
For quadrics of dimension $r\geq 3$,
    $$\vTev^{Q^r}_{g,d} =  \frac{(2r)^g+(-1)^{d}(2\delta)^g}{2}$$
    whenever the virtual Tevelev count is well-posed.
   \end{thm}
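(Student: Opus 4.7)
The plan is to adapt the strategy behind the projective-space formula \eqref{pppp} and the hypersurface Theorem~\ref{virtualtev_hypersurface} in \cite{bp}: express $\vTev^{Q^r}_{g,d}$ as a trace on the small quantum cohomology ring $QH^\ast(Q^r)$, and read the answer off from an eigenvalue decomposition.

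The first step is to set up a general trace formula. Unwinding the definition,
\[
\vTev^X_{g,\beta,n} \;=\; \int_{[\barM_{g,n}(X,\beta)]^{vir}} \tau^\ast\bigl([\mathrm{pt}]_{\barM_{g,n}}\times [\mathrm{pt}]_X^n\bigr).
\]
When $QH^\ast(X)$ is semisimple, Givental--Teleman reconstruction, combined with the topological recursion relations on $\barM_{g,n}$, rewrites this integral as a finite sum over the eigenvectors of quantum multiplication by $[\mathrm{pt}]$, the summand involving the corresponding eigenvalue raised to a power depending on $(g,n)$ and weighted by a Frobenius normalization. As a baseline sanity check, applying the formula to $\bP^r$ must recover $(r+1)^g$.

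I would then specialize to $X=Q^r$ using the presentation of $QH^\ast(Q^r)$ of Beauville, Siebert--Tian, and Fulton--Woodward. For $r\geq 3$, $QH^\ast(Q^r)$ is semisimple, and the spectrum of quantum multiplication by $[\mathrm{pt}]$ splits into two families: a \emph{large} one contributing $(2r)^g$ to the trace and a \emph{small} one contributing $(2\delta)^g$. For odd $r$ the small family comes from a single exceptional eigenvalue of the hyperplane action; for even $r$ the two rulings in middle cohomology furnish additional idempotents that enhance the small family, which is how $\delta$ jumps from $1$ to $2$. In both parities the two families are interchanged by the symmetry $q\mapsto -q$, so extracting the coefficient of $q^d$ yields the sign $(-1)^d$ on the small family, producing exactly $\tfrac{1}{2}\bigl((2r)^g+(-1)^d(2\delta)^g\bigr)$.

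The main obstacle I anticipate is the Frobenius bookkeeping, especially in the even-$r$ case where the two rulings pair off-diagonally rather than diagonally; tracking the resulting factors of $2$ carefully is essential to land on the prefactor $\tfrac{1}{2}$ in front of each term rather than $1$ or $\tfrac{1}{4}$. The dimension hypothesis $r\geq 3$ enters precisely here: for $r=2$ one has $Q^2\cong \bP^1\times\bP^1$, whose quantum cohomology has a product structure that breaks the eigenvalue pattern above, and $r=1$ is $\bP^1$, already subsumed by \eqref{pppp}.
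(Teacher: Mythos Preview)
The paper does not supply its own proof of this theorem: it is stated as a result of \cite{bp} and only the general method is indicated, namely that ``the method of \cite{bp} expresses the virtual Tevelev degrees explicitly in terms of the small quantum cohomology ring of $X$.'' Your outlined strategy---a trace formula on $QH^\ast(Q^r)$ evaluated via the eigenvalue decomposition of quantum multiplication, with the parity of $r$ governing the appearance of $\delta$ through the middle cohomology---is exactly this method, so there is no divergence in approach to report.

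That said, what you have written is a plan rather than a proof. The actual content lies entirely in the steps you defer: deriving the precise trace formula from Givental--Teleman (or, more simply in this case, from the degeneration/TQFT argument that underlies \cite{bp}), computing the idempotents and their Frobenius norms in $QH^\ast(Q^r)$ for both parities of $r$, and verifying that the resulting sum collapses to the stated expression. Your remarks about the ``large'' and ``small'' families and the $q\mapsto -q$ symmetry are plausible heuristics but are not established here; in particular, the claim that the two families are ``interchanged'' by $q\mapsto -q$ and that this produces the sign $(-1)^d$ needs to be made precise in terms of the actual spectrum of quantum multiplication by the hyperplane class on $Q^r$. Until those computations are carried out, the proposal does not constitute a proof.
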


The method of \cite{bp} expresses the
virtual Tevelev degrees explicitly in terms of the
small quantum cohomology ring of $X$. When $QH^*(X)$ is
sufficiently well known, exact calculations are possible.
For further recent progress on the Gromov-Witten of hypersurfaces, see
\cite{ABPZ,Hu1}.

\subsection{Enumerativity}
  Our main topic here is the enumerativity of virtual Tevelev degrees. When $\beta$ is sufficiently large, enumerativity is
  much more likely as evidenced by the following example, where  $X=\bP^1$.
  
\begin{thm}\cite{cps,fl}\label{geomtev_Pr}
Suppose the virtual Tevelev count is well-posed. Then
\begin{align*}
    \Tev^{\bP^1}_{g,d}&=2^g-2\sum_{i=0}^{-\ell-2}\binom{g}{i}+(-\ell-2)\binom{g}{-\ell-1}+\ell\binom{g}{-\ell}\\
&=\int_{\text{Gr}(2,d+1)}\sigma_{1}^{g}\cdot\left[ \sum_{a+b=2d-2-g}\sigma_{a}\sigma_b\right],
\end{align*}
where $\ell=d-g-1$ in the first formula.
\end{thm}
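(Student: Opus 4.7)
The plan is to establish both the Grassmannian intersection formula and its binomial evaluation, with the Grassmannian integral as the main bridge between the geometry on $C$ and the combinatorics.

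First, I would reformulate the counting problem: a degree $d$ map $f:C\to\bP^1$ is the same as a basepoint-free pencil $(L,V)$ on $C$, where $L\in\Pic^d(C)$ and $V\subset H^0(C,L)$ is a $2$-plane, and the incidence conditions $f(p_i)=x_i$ become linear conditions on $V$ at the marked points. After fixing the three-dimensional target automorphism group (say, by imposing $x_1=0$, $x_2=\infty$, and normalizing at a third point), we obtain a morphism from $G^1_d(C)^{\mathrm{bpf}}$ to $(\bP^1)^{n-3}$ whose degree on a general fiber is the quantity $\Tev^{\bP^1}_{g,d}$.

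Second, to convert this count into an intersection on the fixed Grassmannian $\Gr(2,d+1)=\Gr(2,H^0(\bP^1,\cO(d)))$, I would degenerate the generic curve $(C,p_1,\ldots,p_n)$ to a suitable nodal curve (for instance, a chain of elliptic bridges on a rational spine, or a flag curve in the style of Eisenbud--Harris) and track limit linear series. On such a degeneration, admissible limit pencils are naturally encoded as pencils of degree $d$ binary forms on the rational components; each Brill--Noether / vanishing-sequence condition at a node contributes a $\sigma_1$-factor (giving $\sigma_1^g$ in total), and the $n$ point-incidence conditions assemble after pushforward into the symmetric Schubert class $\sum_{a+b=2d-2-g}\sigma_a\sigma_b$.

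Third, the intersection $\int_{\Gr(2,d+1)}\sigma_1^g\cdot\sum_{a+b=2d-2-g}\sigma_a\sigma_b$ is evaluated by Pieri's rule and the standard duality $\int \sigma_{a,b}\,\sigma_{a',b'}=1$ for complementary partitions; the resulting combinatorial sum rearranges, via standard binomial manipulations involving $\ell=d-g-1$, to the closed-form expression in the theorem. Observe that the corrections all vanish when $\ell\geq 0$, i.e.\ $d\geq g+1$, recovering $\Tev^{\bP^1}_{g,d}=2^g=\vTev^{\bP^1}_{g,d}$ by \eqref{pppp} in this range. The main obstacle will be the second step: arranging the degeneration so that every limit linear series smooths to a genuine map on the general curve (no lost count), pinning down the exact Schubert-type contributions on $\Gr(2,d+1)$, and simultaneously verifying enumerativity --- transversality of $\tau_{\cM}$ at the general fiber and the absence of excess contributions from pencils acquiring base points or dropping degree --- which is precisely what distinguishes the geometric Tevelev degree from the virtual one.
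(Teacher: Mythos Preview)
This theorem is not proven in the paper: it is stated with the citation \cite{cps,fl} and no proof is given, as the result is imported from those references. There is therefore no ``paper's own proof'' to compare your proposal against.

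That said, your outline is broadly consistent with the strategy of \cite{fl}, which does proceed by degenerating the general pointed curve, tracking limit linear series on the components, and identifying the resulting count with a Schubert intersection on $\Gr(2,d+1)$; the binomial expression is then a purely combinatorial evaluation of that integral. The closed binomial formula in the first line was obtained independently in \cite{cps} by a rather different method (a recursion coming from the boundary geometry of Hurwitz spaces / admissible covers rather than limit linear series on a Grassmannian), so the two displayed formulas in the theorem in fact reflect the two cited sources. Your proposal conflates these into a single argument passing through the Grassmannian; that is legitimate, since once the intersection number is established the binomial identity is elementary, but it is worth being aware that historically the two formulas arose from different techniques.

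As a proof, however, your text is only a plan: the substantive work --- constructing the specific degeneration, proving that every limit linear series on it smooths (so nothing is lost), verifying transversality so that the geometric count is well-defined, and carrying out the Schubert bookkeeping that produces exactly $\sigma_1^g\cdot\sum_{a+b=2d-2-g}\sigma_a\sigma_b$ --- is asserted rather than done. You correctly flag this in your final paragraph, but a referee would not accept the proposal as a proof in its current form.
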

\noindent When $\ell\le0$, all terms except $2^g$ in the first formula are interpreted to be zero. In particular, virtual Tevelev degrees for $\bP^1$ are enumerative if $d\ge g+1$, but not in general.
For further results related to moduli spaces of Hurwitz covers, see
\cite{CelaLian}.

More generally, virtual Tevelev degrees for $\bP^r$ are enumerative whenever $d\ge rg+r$ but not when $d=r+\frac{gr}{r+1}$ is as small as possible \cite{fl}. The geometric Tevelev degree in the case $d=r+\frac{gr}{r+1}$ recovers Castelnuovo's count of linear series of minimal degree \cite{castelnuovo}. 
The general computation of geometric Tevelev degrees for $\bP^r$ for intermediate $d$ is in completed in \cite{lian}. 

These current developments on Tevelev degrees for $\mathbb{P}^1$
and higher dimensional projective spaces 
may be viewed (in part) as developing a theory intiated by Castelnuovo
in the $19^{th}$ century. The question of connecting classical 
counting to  virtual counting can be formulated as follows.

\begin{question} \label{qqq}
  For which $X$ does the following property hold:
  $\vTev^{X}_{g,\beta}$ is enumerative whenever well-posed and
  $\int_\beta c_1(T_X)$ is sufficiently large (depending on $g$)?
\end{question}

\begin{rem} {\em The $2^g$ formula for $\bP^1$
is connected to many directions
in geometry and physics, see Tevelev's article \cite{Tev}. 
In the Gromov-Witten theory of $\bP^1$, the $2^g$ formula 
appeared in Janda's work \cite{Janda}.
The geometric Tevelev
degrees for $\bP^r$ for large curve classes were studied earlier by Bertram, Daskalopoulos, and Wentworth 
\cite{Bertram94,BDW}
using the classical geometry  of the Quot scheme
before the development
of the virtual fundamental class.
 To connect the Quot scheme fully (for all curve classes) to the
Gromov-Witten calculation \eqref{pppp}, a straightforward path is to
consider the virtual fundamental class of the
Quot scheme \cite{MO} and then  apply the comparison result
\cite[Theorem 3]{MOP}. Alternatively, formula \eqref{pppp} is
a direct consequence of the Vafa-Intriligator formula \cite{SiebertTian97}.}
\end{rem}

\subsection{Main results}
We have positive answers to Question \ref{qqq} for homogeneous spaces and hypersurfaces. Together with the
above calculations of $\vTev^{X}_{g,d}$ from \cite{bp}, we obtain
calculations of geometric Tevelev degrees in many new cases.

\begin{thm}\label{thm_g/p} Let $X=G/P$ be a homogeneous space for a linear algebraic group. Then, for fixed $g$, the virtual Tevelev degree
  $\vTev^{X}_{g,\beta}$ is enumerative whenever well-posed and
  $\int_\beta c_1(T_X)$ is sufficiently large.
\end{thm}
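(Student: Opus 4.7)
The plan is to reduce enumerativity to three assertions:
\textbf{(i)} the restriction of $\tau$ to the main component $M\subset\barM_{g,n}(X,\beta)$ parametrizing maps from smooth domains is dominant onto $\barM_{g,n}\times X^n$;
\textbf{(ii)} at a general point $[f]$ in a general fiber of $\tau|_M$, the obstruction group $H^1(C,f^{\ast}T_X)$ vanishes, so $M$ is smooth of the expected dimension at $[f]$ and $[\barM_{g,n}(X,\beta)]^{vir}=[M]$ in a neighborhood;
\textbf{(iii)} no irreducible component of $\barM_{g,n}(X,\beta)$ other than $M$ dominates $\barM_{g,n}\times X^n$.

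For (i), I would construct a stable map in a general fiber by a gluing-and-smoothing argument. Attach a rational tail $T_i\cong\bP^1$ at each marked point $p_i$ of the general smooth $(C,p_1,\ldots,p_n)$. Send $C$ to $X$ by some map in a moderate class $\beta_0$, and send each $T_i$ through $x_i$ by a very free rational curve of class $\beta_i$, chosen so that $\beta_0+\sum_i\beta_i=\beta$; very free rational curves abound in $G/P$ in every sufficiently positive class because $T_X$ is globally generated. Taking $\beta_0$ itself large enough to ensure $H^1(C,f|_C^{\ast}T_X)=0$, the resulting stable map is unobstructed and smooths to a map from a smooth domain, giving a point of $M$ with $\tau$-image $([C,p_i],(x_i))$.

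Assertion (ii) follows from semicontinuity: Riemann--Roch together with the dimension constraint \eqref{ddd} gives $\chi(C,f^{\ast}T_X)=rn\geq 0$, and the existence of an unobstructed deformation as in the previous step forces generic vanishing of $h^1$ along $M$. Once $M$ is known to be smooth of the expected dimension at a general fiber point, $\tau|_M$ is a dominant morphism between smooth varieties of equal dimension, so a general fiber is automatically reduced and zero-dimensional by generic smoothness, and the virtual class coincides with $[M]$ at such points.

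The technical heart of the proof is (iii). Components of $\barM_{g,n}(X,\beta)$ other than $M$ parametrize maps with nodal or reducible domain; in higher genus their dimensions can exceed the expected one, and they could a priori contribute to $\tau_{\ast}[\barM_{g,n}(X,\beta)]^{vir}$. I would carry out a stratum-by-stratum analysis over the stratification by decorated modular graphs (each vertex carrying a subgenus and a subclass of $\beta$), computing for every graph the dimension of the image in $\barM_{g,n}\times X^n$ and showing that for all $\beta$ with $\int_\beta c_1(T_X)$ large enough the image has strictly positive codimension. This is where the homogeneous structure of $X=G/P$ enters most directly, through the positivity and globally generated property of $T_X$ and the transitivity of $G$, and it is also the step that fixes the required lower bound on $\beta$.
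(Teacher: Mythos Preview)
Your three-part outline matches the paper's architecture, but two of the steps have genuine gaps.

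In (ii), semicontinuity from a single constructed unobstructed point only controls the irreducible component of $\cM_{g,n}(X,\beta)$ containing that point; nothing in your argument rules out another smooth-domain component dominating $\barM_{g,n}\times X^n$ with excess dimension, which would make the geometric count ill-defined or disagree with the virtual one. The paper's argument (Proposition~\ref{H1_vanishing}) is both cleaner and covers all components at once: if $[f]$ lies on \emph{any} component on which $\ev$ is dominant and $n\ge 2g$, then surjectivity of $d\ev$ at a general $[f]$ produces a generically surjective bundle map $\cO_C\bigl(\sum_{i\ge 2}p_i\bigr)^{\oplus r}\to f^{*}T_X$, forcing $H^1(C,f^{*}T_X)=0$. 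This also renders your gluing construction in step (i) unnecessary.

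In (iii), you correctly flag global generation of $T_{G/P}$ as the relevant input but never name the specific consequence that makes the stratum analysis terminate. The missing point is that global generation yields the \emph{uniform} bound $h^1(C,f^{*}T_X)\le gr$ for every $[f:C\to X]\in\cM_g(X,\beta)$, independent of $\beta$ and $f$; this is exactly the paper's property $(\star)_g$. Coupled with the fact (again from global generation) that every genus-$0$ boundary stratum of $\barM_{0,n}(X,\beta)$ already has the expected dimension, one sees that each boundary stratum of $\barM_{g,n}(X,\beta)$ loses at least one dimension per node while the total excess coming from the genus-$g$ spine is capped by a constant. Hence for $n$ large no boundary stratum can dominate. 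Without isolating this uniform bound, a ``stratum-by-stratum analysis over all decorated graphs'' is only a promissory note: the number of graphs grows with $\beta$, and you have given no mechanism to extract a single threshold on $\int_\beta c_1(T_X)$ that works for all of them.
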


In case $g=0$, a stronger result holds for $X=G/P$: the virtual Tevelev degrees $\vTev^{X}_{0,\beta}$ are enumerative for \emph{all} positive curve classes $\beta$. The stronger genus $0$ claims follows easily from the
unobstructedness of genus $0$ stable maps to  $G/P$.
 In case $X$ is a Grassmannian, the enumerativity claim of
Theorem \ref{thm_g/p} is known from the results of \cite{Bertram94,BDW}
and the comparison results of \cite{MOP}.

  \begin{thm}\label{thm_hypersurface} Let $X\subset \mathbb{P}^{r+1}$ be a nonsingular hypersurface of degree
    $e\geq 3$ and dimension $$r>(e+1)(e-2)\, .$$ Then, for fixed $g$, the virtual Tevelev degree
    $\vTev^{X}_{g,\beta}$ is enumerative whenever well-posed and
    $\int_\beta c_1(T_X)$ is sufficiently large.
\end{thm}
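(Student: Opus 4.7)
The plan is to prove enumerativity by separately controlling the interior and the boundary of the moduli space $\overline{\cM}_{g,n}(X,\beta)$ with respect to the morphism $\tau$. I would establish two assertions: (a) the fiber of $\tau_\cM:\cM_{g,n}(X,\beta)\to\cM_{g,n}\times X^n$ over a general point $([C,p_\bullet],(x_\bullet))$ of $\cM_{g,n}\times X^n$ is a finite, reduced scheme of cardinality $\vTev^X_{g,\beta}$, and (b) no irreducible component of $\overline{\cM}_{g,n}(X,\beta)$ whose generic element has a reducible source dominates $\overline{\cM}_{g,n}\times X^n$. Together these imply that the pushforward of the virtual class to $\overline{\cM}_{g,n}\times X^n$ is supported on the main component and coincides with the set-theoretic degree of $\tau_\cM$, yielding $\Tev^X_{g,\beta}=\vTev^X_{g,\beta}$.

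For (a), the key input is deformation theory. Fix a general $(C,p_1,\ldots,p_n)\in\cM_{g,n}$ and general $x_i\in X$, and consider $f:C\to X$ of class $\beta$ with $f(p_i)=x_i$. The Zariski tangent and obstruction spaces to the fiber at $f$ are respectively $H^0(C,f^*T_X(-p_1-\cdots-p_n))$ and $H^1(C,f^*T_X(-p_1-\cdots-p_n))$; the dimension constraint \eqref{ddd} makes the expected fiber dimension equal to zero. The hypothesis $r>(e+1)(e-2)$ is tailored so that known existence results for very free rational curves on low degree hypersurfaces apply: for $\beta$ sufficiently large, a generic contributing $f$ can be shown to factor through a very free curve, so that $f^*T_X$ is globally generated by sections vanishing at $p_1+\cdots+p_n$, forcing $H^1(f^*T_X(-\sum p_i))=0$. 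Smoothness and reducedness of the fiber follow at once, and a comparison with the formula of Theorem \ref{virtualtev_hypersurface} identifies the cardinality with $\vTev^X_{g,\beta}$.

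The main obstacle is (b), the boundary analysis. For each stable dual graph $\Gamma$ representing a nontrivial degeneration of the source curve, one must bound the dimension of the image of the corresponding stratum $\overline{\cM}_\Gamma(X,\beta)\subset\overline{\cM}_{g,n}(X,\beta)$ under $\tau$. This reduces to summing expected dimensions of Kontsevich spaces on each subcurve of $\Gamma$ and accounting for the gluing and marked-point incidence conditions in the target. The bound $r>(e+1)(e-2)$ is precisely what is required so that the auxiliary spaces $\overline{\cM}_0(X,\beta')$ are irreducible of the expected dimension for the relevant $\beta'$ appearing in $\Gamma$, making the dimension count on each component sharp. A combinatorial case analysis over all $\Gamma$ (rational tails carrying marked points, contracted components, bubbling of the main genus-$g$ component, etc.) then shows that the image dimension of every nontrivial boundary stratum is strictly less than $\dim\overline{\cM}_{g,n}\times X^n=3g-3+n+nr$, so these strata contribute nothing to $\tau_*[\overline{\cM}_{g,n}(X,\beta)]^{vir}$. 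Combined with (a), this proves $\vTev^X_{g,\beta}=\Tev^X_{g,\beta}$.
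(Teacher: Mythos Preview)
Your high-level decomposition into (a) interior smoothness and (b) boundary non-dominance matches the paper's, but both steps as you describe them contain genuine gaps.

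For (a), the sentence ``a generic contributing $f$ can be shown to factor through a very free curve'' does not make sense when $g>0$: a map from a positive-genus curve does not factor through a rational curve unless its image is rational. Moreover, invoking Theorem~\ref{virtualtev_hypersurface} to identify the fiber cardinality with $\vTev^X_{g,\beta}$ is circular; that equality is exactly what is being proven. The paper's interior argument (Proposition~\ref{transversality_smooth}) is in fact \emph{completely independent} of the hypersurface bound: for any nonsingular projective $X$, once $n\ge 2g$ and $[f]$ lies on a dominant component, one lifts a basis of $T_{x_1}X$ to sections of $f^*T_X$ vanishing at $p_2,\ldots,p_n$, and this generically surjective bundle map forces $H^1(C,f^*T_X)=0$.

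For (b), your key claim that the bound $r>(e+1)(e-2)$ makes the spaces $\overline{\cM}_0(X,\beta')$ irreducible of the expected dimension is false for an \emph{arbitrary} nonsingular hypersurface (the theorem is not only about general $X$): if $X$ contains a linear subspace of large dimension, the space of lines already has excess dimension. The paper does not attempt to rule out excess. Instead, it bounds the excess via the normal-bundle sequence
\[
H^0(C,f^*N_{X/\bP^{r+1}})\to H^1(C,f^*T_X)\to H^1(C,f^*T_{\bP^{r+1}}),
\]
obtaining $h^1(C,f^*T_X)\le e\cdot\deg(\beta)+O(1)$. The hypersurface bound $r>(e+1)(e-2)$ enters only to make this compatible with the inequality $\int_\beta c_1(T_X)/h^1>r-s(X)$ (property $(\star\star)_g$, via $s(X)\ge r+2-e$). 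The boundary strata are then handled not by checking expected dimension on each rational piece, but by the numerical inequality $s(X)+t(X)\ge 2(r+2-e)\ge r+1$, which lets one strip off all rational trees with more than one component and reduce to the case of a smooth spine with simple rational tails (Propositions~\ref{rational_tails_simple} and~\ref{rational_tails_trees}). Your proposed ``combinatorial case analysis over all $\Gamma$ with each piece of expected dimension'' would not go through as stated.
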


In case $g=0$ and $r>(e+1)(e-2)$, we again have a stronger result:
the virtual Tevelev degrees $\vTev^{X}_{0,\beta}$ are enumerative for \emph{all} positive curve classes $\beta$, see Corollary \ref{hyp_g=0}. If $X$ is a \emph{very general} hypersurface,  the stronger claim
follows from the fact that $\overline{M}_{0,n}(X,\beta)$ is irreducible of the expected dimension for $r>e$  and
all positive curve classes $\beta$ \cite{bk,hrs,ry}. Our proof works for \emph{all} $X$ in the more restrictive range for $e$.

It is natural to hope for the following result which we formulate as a speculation.

\begin{speculation}\label{main_speculation} Let $X$ be a nonsingular projective Fano variety. For fixed $g$, the virtual Tevelev degree
  $\vTev^{X}_{g,\beta}$ is enumerative whenever well-posed and
  $\int_\beta c_1(T_X)$ is sufficiently large.
\end{speculation}

\noindent In case $g=0$, $\vTev^{X}_{0,\beta}$ is enumerative
whenever well-posed for all positive curve classes $\beta$ in all Fano examples that we know the speculation to be true.{\footnote{Speculation \ref{main_speculation} was proposed in 2021. In 2023,
    counterexamples to Speculation \ref{main_speculation} have been constructed by Beheshti-Lehmann-Riedl-Starr-Tanimoto \cite{blrst} when $X$ is a special Fano hypersurface of large degree. We thank Eric Riedl for communicating these
    examples to us. A revised speculation should perhaps include a condition
  on the Fano index.}

As we will see, there are two main difficulties in proving a general
enumerativity statement for Fano varieties $X$:

\vspace{6pt}
\noindent $\bullet$ The first concerns controlling the excess dimensions of families of \textit{general} curves of positive genus in $X$. When $X=\bP^r$, Brill-Noether theory provides optimal statements (see Remark \ref{brill_noether}), but we do not have such results in general. 

\vspace{6pt}
\noindent $\bullet$ The second concerns controlling
the excess dimensions of families of rational curves. Similar issues for hypersurfaces are studied in \cite{bk,hrs,ry} in characteristic 0 and in
\cite{STZ} in positive characteristic.

\vspace{6pt}
Using our study of the enumerativity of virtual Tevelev
degrees of hypersurfaces, we can 
can prove a new result on the existence of very free rational curves in characterstic
$p$ (where $p$ does not divide the virtual Tevelev degree).
The positive characteristic results are presented in Section \ref{very_free_sec}.

\subsection{Acknowledgments}

Our project started during a visit to
HU Berlin by R.P. in July 2021 and
continued at the
{\em Helvetic Algebraic Geometry Seminar}
in Geneva 
and at the
{\em Forschungsinsitut f\"ur Mathematik} at
ETH Z\"urich
in August 2021. We thank A. Buch, A. Cela, G. Farkas, D. Ranganathan, E. Riedl, and J. Schmitt for many discussions about Tevelev
degrees in various contexts. The application to very free
rational curves on hypersurfaces in characteristic $p$ was
suggested to us by J. Starr.

C.L. was funded by an NSF postdoctoral
fellowship, grant DMS-2001976. He gratefully acknowledges the Institut f\"{u}r Mathematik at Humboldt-Universit\"{a}t zu Berlin for its continued support.
R.P. was supported by SNF-200020-182181,  ERC-2017-AdG-786580-MACI, and SwissMAP.  
The project has received funding
from the European Research Council (ERC) under the European Union Horizon 2020 research and innovation program (grant agreement No 786580).

\section{Analysis of the moduli space of stable maps}
\subsection{Overview}
Let $X$ be a nonsingular projective
variety of dimension $r$. Let  $g\geq 0$ be the genus, and let $\beta\in H_2(X,\mathbb{Z})$ be an effective
curve class.

We study here the enumerativity of virtual Tevelev degrees when $\beta$ (or, equivalently, $n$) is sufficiently large. There are two main aspects of the analysis: 

\begin{enumerate}
\item[(i)] We must control the fibers of $\tau$ when restricted to the open locus $$\cM_{g,n}(X,\beta)\subset \barM_{g,n}(X,\beta)$$ of stable maps with nonsingular domains in order to verify that the geometric Tevelev degrees are well-defined.
\item[(ii)] We must show that a general fiber of $\tau$ contains no stable maps at the boundary.
\end{enumerate}

\subsection{Stable maps with nonsingular domains} \label{smnd}
We start with a criterion for unobstructedness of maps of 
nonsingular curves $C$ to a nonsingular projective
variety $X$. We do not require $X$ to be Fano in
Section \ref{smnd}.

\begin{prop}\label{H1_vanishing}
Suppose $[f:(C,p_1,\ldots,p_n)\to X]\in \cM_{g,n}(X,\beta)$ lies over a point $$(C,p_1,\ldots,p_n,x_1,\ldots,x_n)\in \cM_{g,n}\times X^n$$ and  the 
evaluation
map $$\ev:\cM_{g,n}(X,\beta)\to X^n$$ is surjective on tangent spaces at $[f]$. Assume further that $(C,p_1,\ldots,p_n)\in\cM_{g,n}$ is general.
If $n\ge g+1$, or equivalently $\int_\beta c_1(T_X)\ge 2gr$,
then $H^1(C,f^{*}T_X)=0$.
\end{prop}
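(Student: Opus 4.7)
On the smooth-domain locus, the deformation theory at $[f]\in\cM_{g,n}(X,\beta)$ is controlled by the two-term tangent complex $\mathcal{K}^\bullet_{\mathrm{abs}}=[T_C(-D)\xrightarrow{df} f^*T_X]$ in degrees $[0,1]$, where $D=p_1+\cdots+p_n$, with $T_{[f]}\cM_{g,n}(X,\beta)=\mathbb{H}^1(\mathcal{K}^\bullet_{\mathrm{abs}})$ and obstruction space $\mathbb{H}^2(\mathcal{K}^\bullet_{\mathrm{abs}})$. The twisted analogue $\mathcal{K}^\bullet_{\mathrm{rel}}=[T_C(-D)\to f^*T_X(-D)]$ controls the tangent to the fiber of $\ev$ through $\vec{x}=(x_1,\ldots,x_n)$. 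The short exact sequence $0\to f^*T_X(-D)\to f^*T_X\to f^*T_X|_D\to 0$ on $C$ yields an exact triangle of these complexes, and since $H^1(f^*T_X|_D)=0$ the associated long exact hypercohomology sequence becomes
\[
0\to\mathbb{H}^1_{\mathrm{rel}}\to T_{[f]}\cM_{g,n}(X,\beta)\xrightarrow{d\ev}\bigoplus_{i=1}^n T_{x_i}X\to\mathbb{H}^2_{\mathrm{rel}}\to\mathbb{H}^2_{\mathrm{abs}}\to 0.
\]

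The hypothesis that $d\ev$ is surjective on Zariski tangent spaces is equivalent to the injectivity of $\mathbb{H}^2_{\mathrm{rel}}\to\mathbb{H}^2_{\mathrm{abs}}$; combined with the automatic surjectivity of this map (inherited from $H^1(f^*T_X(-D))\twoheadrightarrow H^1(f^*T_X)$), it is an isomorphism. By the hypercohomology spectral sequence, $\mathbb{H}^2_{\mathrm{abs}}$ and $\mathbb{H}^2_{\mathrm{rel}}$ are the cokernels of the two Kodaira--Spencer maps $H^1(T_C(-D))\to H^1(f^*T_X)$ and $H^1(T_C(-D))\to H^1(f^*T_X(-D))$, so this isomorphism translates into the containment
\[
\ker\bigl(H^1(f^*T_X(-D))\twoheadrightarrow H^1(f^*T_X)\bigr)\subseteq\mathrm{image}\bigl(H^1(T_C(-D))\to H^1(f^*T_X(-D))\bigr).
\]
Riemann--Roch together with the dimension constraint \eqref{ddd} gives $\chi(C,f^*T_X)=rn$ and $\chi(C,f^*T_X(-D))=0$. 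Consequently $h^0(C,f^*T_X)=rn+h^1(C,f^*T_X)$, so $H^1(C,f^*T_X)=0$ if and only if the evaluation $H^0(C,f^*T_X)\to\bigoplus T_{x_i}X$ is injective, equivalently $H^0(C,f^*T_X(-D))=0$. It thus suffices to prove this latter vanishing.

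The hardest step is to exploit the hypothesis $n\geq 2g$. This forces $\deg T_C(-D)\leq -2g$, making $T_C(-D)$ strongly negative; correspondingly, by Serre duality on $C$, $H^1(T_C(-D))^\vee=H^0(\omega_C^{\otimes 2}(D))$, and $\omega_C^{\otimes 2}(D)$ has large degree $4g-4+n\geq 6g-4$. I would argue by contradiction: a nonzero $\sigma\in H^0(C,f^*T_X(-D))$ defines an infinitesimal deformation of $f$ with $(C,D,\vec{x})$ fixed, lying in the Hom-direction of the fiber of $\ev$; through the containment above and Serre duality on $C$, this $\sigma$ would pair against sections of $\omega_C^{\otimes 2}(D)$ to produce an incompatibility with the positivity forced by $n\geq 2g$. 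The main technical challenge is making this incompatibility precise, by carefully tracking how the two Kodaira--Spencer maps interact with the evaluation data through the spectral sequence and comparing kernels and images.
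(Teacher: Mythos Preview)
Your proposal has a genuine gap at the decisive step. You correctly set up the hypercohomology framework and (assuming the dimension constraint \eqref{ddd}) reduce to showing $H^0(C,f^*T_X(-D))=0$. But the last paragraph, where the hypothesis $n\ge 2g$ is supposed to enter, is not an argument: you write that a nonzero $\sigma$ ``would pair against sections of $\omega_C^{\otimes 2}(D)$ to produce an incompatibility'' and then admit that ``the main technical challenge is making this incompatibility precise.'' Nothing in the containment
\[
\ker\bigl(H^1(f^*T_X(-D))\to H^1(f^*T_X)\bigr)\subseteq \mathrm{image}\bigl(H^1(T_C(-D))\to H^1(f^*T_X(-D))\bigr)
\]
by itself forces $H^0(f^*T_X(-D))=0$, and I do not see how to complete your sketch. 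Note also that your reduction uses \eqref{ddd}, which is not actually needed for the proposition.

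The paper's argument is far more direct, and its key idea is one you miss by treating all $n$ points symmetrically through $D=\sum p_i$. Instead, single out $p_1$: surjectivity of $d\ev$ lets you hit any $v\in T_{x_1}X$ at $p_1$ while \emph{fixing} the images at $p_2,\ldots,p_n$. Such a first-order deformation is a section of $f^*T_X$ vanishing at $p_2,\ldots,p_n$, i.e.\ a map $\cO_C\bigl(\sum_{i\ge 2}p_i\bigr)\to f^*T_X$ which is nonzero at $p_1$. Running over a basis of $T_{x_1}X$ yields a bundle map $\cO_C\bigl(\sum_{i\ge 2}p_i\bigr)^{\oplus r}\to f^*T_X$ that is surjective at $p_1$, hence generically surjective, hence surjective on $H^1$. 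Now $n\ge 2g$ enters transparently: the source line bundle has degree $n-1\ge 2g-1$, so its $H^1$ vanishes, and therefore $H^1(C,f^*T_X)=0$.
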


\begin{proof}
Let $v$ be a non-zero tangent vector of $X$ at $x_1$. By assumption, there exists a tangent vector of $\cM_{g,n}(X,\beta)$ at $[f]$ mapping to $(v,0,\ldots,0)\in T_{(x_1,\ldots,x_n)}X^n$. This is equivalent to the data of a section $\phi_v:\cO_C(\sum_{i=2}^{n}p_i)\to f^{*}T_X$ evaluating to $v\in T_{x_1}X$ at $p_1$.

Varying over a basis of $T_{x_1}X$, we obtain a map of vector bundles on $C$, $$\phi:\cO_C\left(\sum_{i=2}^{n}p_i\right)^{\oplus r}\to f^{*}T_X\, ,$$ that is surjective at $p_1$ and therefore generically surjective. Thus, the induced map on $H^1$ is surjective. 

On the other hand, we claim that $H^1(\cO_C(\sum_{i=2}^{n}p_i))=0$, which implies the needed conclusion. The $H^1$-vanishing is an open condition, so it suffices to degenerate to the situation in which the $p_i$ become equal to a single general point $p$. If $h^1(\cO_C((n-1)p))>0$, then $C$ is a general curve possessing a linear series $V$ of degree $d=n-1\ge g$ and rank $s\ge d-g+1$, ramified to order $d-1$ at $p$. The pointed Brill-Noether number of $V$ is 
\begin{equation*}
\widehat{\rho}=g-(s+1)(g-d+s)-(d-1)=-s(s-(d-g)+1)+1<0,
\end{equation*}
as $d\ge g$ and $s\ge 1$, contradicting the pointed Brill-Noether theorem \cite[Proposition 1.2]{eh}.
\end{proof}

If $n\ge 2g$, we also obtain the conclusion for \emph{arbitrary} pointed curves $(C,p_1,\ldots,p_n)$, as $H^1(\cO_C(\sum_{i=2}^{n}p_i))=0$ for degree reasons. When $g=0$, we must assume further that $n\ge1$ in order to choose the point $x_1$.

Suppose $\vTev^X_{g,\beta}$ is well posed, and let $n=n(g,\beta)$. Let $$(C,p_1,\ldots,p_n,x_1,\ldots,x_n)\in \cM_{g,n}\times X^n$$ be a general point. 

\begin{prop}\label{transversality_smooth}
If $n\ge g+1$, then there are finitely many maps $$[f:(C,p_1,\ldots,p_n)\to X]\in \cM_{g,n}(X,\beta)$$ lying over $(C,p_1,\ldots,p_n,x_1,\ldots,x_n)\in \barM_{g,n}\times X^n$, 
and all such maps are transverse. 
\end{prop}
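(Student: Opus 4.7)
The plan is to combine Proposition \ref{H1_vanishing} with a generic smoothness argument applied to each irreducible component of $\cM_{g,n}(X,\beta)$ that dominates $\cM_{g,n}\times X^n$ under $\tau$. Well-posedness of $\vTev^{X}_{g,\beta}$ guarantees that the virtual dimension of $\cM_{g,n}(X,\beta)$ coincides with $\dim(\cM_{g,n}\times X^n)$, so I expect to show that every dominant component attains exactly this dimension and that $\tau$ is generically étale on each of them.

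First I would fix an irreducible component $Z\subset\cM_{g,n}(X,\beta)$ whose image under $\tau$ is dense in $\cM_{g,n}\times X^n$. For a sufficiently general point $(C,p_1,\ldots,p_n,x_1,\ldots,x_n)$ of the image and any $[f]\in Z$ in the fiber, generic smoothness of the dominant morphism $\tau|_Z$ (we are in characteristic $0$) forces $d\tau$ to be surjective at $[f]$; in particular, $d\ev$ is surjective. Proposition \ref{H1_vanishing} then applies, using the hypothesis $n\geq 2g$, to yield $H^{1}(C,f^{*}T_X)=0$. By standard deformation theory, $\cM_{g,n}(X,\beta)$ is unobstructed at $[f]$, hence smooth of the expected dimension, which equals $\dim(\cM_{g,n}\times X^n)$. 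Thus $\tau$ is a morphism between smooth varieties of equal dimension with surjective differential at $[f]$, i.e.\ étale at $[f]$, which is exactly the transversality statement.

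Étaleness at a general fiber point of $Z$ also makes $\tau|_Z$ generically finite, so the fiber of $\tau|_Z$ over a sufficiently general point is a finite set. Since $\cM_{g,n}(X,\beta)$ has only finitely many irreducible components, and the non-dominant ones have image of strictly smaller dimension than $\cM_{g,n}\times X^n$ and so do not meet a sufficiently general fiber, the entire fiber of $\tau$ over such a general point is the finite union of the étale fibers coming from dominant components, with every fiber point transverse.

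The only real subtlety I see is whether some dominant component of $\cM_{g,n}(X,\beta)$ might have excess dimension, in which case its generic fibers under $\tau$ would be positive-dimensional. The elegant point is that Proposition \ref{H1_vanishing} rules this out under $n\geq 2g$: any such dominant component contains points where $d\ev$ is surjective, and at those points the moduli space is already smooth of expected dimension, so excess dimension cannot occur.
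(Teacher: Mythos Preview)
Your argument is correct and follows essentially the same route as the paper: on each component dominating $\cM_{g,n}\times X^n$, generic smoothness gives surjectivity of $d\ev$ at a general fiber point, Proposition \ref{H1_vanishing} then yields unobstructedness and hence smoothness of the expected dimension, and \'etaleness of $\tau$ at such points gives both transversality and finiteness. Your write-up is in fact slightly more explicit than the paper's in handling the non-dominant components and the finiteness of components, but the underlying proof is the same.
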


\begin{proof}
Suppose $[f:(C,p_1,\ldots,p_n)\to X]\in \cM_{g,n}(X,\beta)$ is such a map. Then, $[f]$ must lie on a component of $Z\subset \cM_{g,n}(X,\beta)$ dominating $\cM_{g,n}\times X^n$. In particular, the evaluation map $$\text{ev}: \cM_{g,n}(X,\beta)\to X^n$$ is dominant on $Z$, so the map $\text{ev}$ is surjective on tangent spaces at $[f]$ (as the $x_i$ are general).

By Proposition \ref{H1_vanishing}, $\cM_{g,n}(X,\beta)$ is nonsingular of the expected dimension at $[f]$.
Therefore, $Z$ is \'{e}tale over $\cM_{g,n}\times X^n$ at $[f]$. Finiteness and transversality then follow.
\end{proof}

By Proposition \ref{transversality_smooth}, in the
well-posed case with $n\ge2g$, the geometric Tevelev degree is well-defined  and equal to the degree of $$\tau:\cM_{g,n}(X,\beta)\to\cM_{g,n}\times X^n\, .$$

\begin{rem}\label{brill_noether}
{\em The bound $n\ge g+1$ is not sharp. For example, if $X=\bP^r$, then by the Brill-Noether Theorem, geometric Tevelev degrees are also well-defined whenever $n\ge r+1$, in which case $f:C\to\bP^r$ is necessarily non-degenerate.}
\end{rem}

%
%
%


\subsection{Stable maps with singular domains}

We now assume $X$ is a projective Fano variety of dimension $r$.

\begin{lem}\label{unirational_expdim}
For maps from $\bP^1$ to $X$, we have the
following basic results:
\begin{enumerate}
\item[(a)] If $\ev_1:\cM_{0,2}(X,\beta)\to X$ is dominant (on every component of the source), then $\ev_2:\cM_{0,2}(X,\beta)\to X$ is also dominant, and $\cM_{0,2}(X,\beta)$ is generically nonsingular of the expected dimension.
\item[(b)] If $f^{*}T_X$ is globally generated for every $f:\bP^1\to X$, then every boundary stratum of $\overline{M}_{0,n}(X,\beta)$ is nonsingular of the expected dimension.
\end{enumerate}
\end{lem}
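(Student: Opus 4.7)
The plan is to reduce both parts to the vanishing $H^1(\bP^1, f^*T_X) = 0$ which, by Grothendieck's splitting $f^*T_X \cong \bigoplus \cO_{\bP^1}(a_i)$, is equivalent to all $a_i \ge 0$. This mirrors the strategy of Proposition~\ref{H1_vanishing}.

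For part (a), the dominance of $\ev_2$ is immediate from the involution $\sigma$ of $\cM_{0,2}(X,\beta)$ swapping the two marked points: $\ev_2 = \ev_1 \circ \sigma$ and $\sigma$ permutes the irreducible components, so dominance of $\ev_1$ on each component forces dominance of $\ev_2$ on each component. For the smoothness claim, I would take a general point $[f:\bP^1\to X]$ of each component $Y$. Dominance of $\ev_1|_Y$ makes $d\ev_1$ surjective on $T_{[f]}Y$, hence on the ambient Zariski tangent space $T_{[f]}\cM_{0,2}(X,\beta)$. The deformation-theoretic identification $T_{[f]}\cM_{0,2}(X,\beta) = H^0(\bP^1, f^*T_X)/\bC$ (where the $\bC$ is the infinitesimal rotation of $\bP^1$ fixing $p_1,p_2$, which vanishes at $p_1$), combined with the identification of $d\ev_1$ as the map $[\sigma]\mapsto\sigma(p_1)$, then shows $H^0(\bP^1, f^*T_X) \to (f^*T_X)_{p_1}$ is surjective. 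The Grothendieck decomposition now forces every $a_i \ge 0$, hence $H^1(\bP^1, f^*T_X) = 0$, so $[f]$ is unobstructed of the expected dimension.

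For part (b), the hypothesis immediately gives $H^1(\bP^1, f_v^*T_X) = 0$ for every component $f_v:C_v\to X$ of any stable map $[f:C\to X]$ lying in a boundary stratum $\cM_\tau \subset \barM_{0,n}(X,\beta)$ of combinatorial type $\tau$. I would realize $\cM_\tau$ as the fibered product of the component spaces $\cM_{0,n_v}(X,\beta_v)$ taken along the pairs of evaluation maps to $X$ attached to each node of $\tau$. By the $H^1$-vanishing each factor is smooth of the expected dimension at $[f_v]$, and global generation of $f_v^*T_X$ makes every evaluation map $\cM_{0,n_v}(X,\beta_v)\to X$ submersive at $[f_v]$, so the node-matching conditions are cut out transversely. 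Hence $\cM_\tau$ is smooth of the expected codimension, and a routine dimension count confirms the expected dimension.

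The main technical step is the tangent-space bookkeeping in (a): identifying $T_{[f]}\cM_{0,2}(X,\beta)$ with $H^0(f^*T_X)/\bC$ and checking that $d\ev_1$ is induced by evaluation at $p_1$, despite the subtleties coming from the $\text{PGL}_2$-quotient. Once this linear-algebra reduction is in hand, both parts follow directly from the Grothendieck splitting on $\bP^1$, just as in Proposition~\ref{H1_vanishing}.
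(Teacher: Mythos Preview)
Your proposal is correct and follows essentially the same strategy as the paper: reduce everything to the non-negativity of the summands in the Grothendieck splitting of $f^*T_X$, hence to $H^1(\bP^1,f^*T_X)=0$. The only cosmetic difference is that for the dominance of $\ev_2$ in (a) you invoke the marked-point involution $\sigma$, whereas the paper deduces it directly from the non-negativity of the summands (which you establish anyway for the smoothness claim); for (b) your fibered-product/transversality description is exactly what the paper's terse ``same argument by induction on the number of components'' unpacks to.
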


\begin{proof}
First, consider claim (a). At a generic
point $[f:\bP^1\to X]$ of any irreducible component of $\cM_{0,2}(X,\beta)$ on which $\ev_1$ is dominant, all summands of $T_X|_{\bP^1}$ are non-negative, so $\ev_2$ is also dominant on that component. Moreover, $H^1(\bP^1,T_X)=0$ at $[f]$, so $\cM_{0,2}(X,\beta)$ is generically nonsingular of the expected dimension.

Claim (b) follows from the same argument as (a), by induction on the number of components of the domain curve in the stratum in question. See, for example,
\cite{FultonP}.
\end{proof}

%
%
%
%

\begin{defn}
Let $s(X)>0$ be the smallest positive integer for which there exists an effective curve class $\beta\in H_2(X,\bZ)$ such that $$s(X)=\int_\beta c_1(T_X)$$ 
and $\text{ev}_1:\overline{M}_{0,1}(X,\beta)\to X$ is surjective.

Let $t(X)>0$ be the smallest positive integer for which there exists an effective curve class $\beta\in H_2(X,\bZ)$ such that $$t(X)=\int_\beta c_1(T_X)\, .$$
\end{defn}

\begin{defn}
We say  $X$ has property $(\star)_g$ if, for 
 every curve class $\beta$ and for every $[f:C\to X]\in\cM_g(X,\beta)$, we have
\begin{equation*}
h^1(C,f^{*}T_X)\le K_{g,X},
\end{equation*}
for some constant $K_{g,X}$
depending only on $g$ and $X$ (so {\em not} on $C$, $\beta$, and $f$).

We say that $X$ has property $(\star\star)_g$ if 
\begin{equation*}
\liminf \frac{\int_\beta  c_1(T_X)}{h^1(C,f^{*}T_X)}>r-s(X)\, .
\end{equation*}
where we range over all curve classes $\beta$ and $[f:C\to X]\in\cM_g(X,\beta)$ and order by $\int_X\beta\cdot c_1(T_X)$. 
\end{defn}
\noindent Property $(\star\star)_g$ is automatically satisfied if $(\star)_g$ is, or if $s(X)>r$.

\begin{ex}
{\em If $X=G/P$, then $T_X$ is globally generated, so
\begin{equation*}
h^1(C,f^{*}T_X)\le gr,
\end{equation*}
In particular, $X$ satisfies $(\star)_g$ for any $g$.}
\end{ex}

\begin{ex}
We will see in later in Proposition \ref{complete_intersection} and the proof of Theorem \ref{thm_hypersurface} that hypersurfaces $X_e\subset\bP^{r+1}$ of sufficiently low degree $e$ satisfy $(\star\star)_g$.
\end{ex}


%

We are now ready to show that, under certain hypotheses, stable maps $[f:C\to X]$ at the boundary of the moduli space
cannot contribute to the virtual Tevelev degree if $n$ is sufficiently large. We first consider the case in which $C$ is the union of a nonsingular component and disjoint nonsingular rational tails, each containing a marked point $p_i$.

\begin{prop}\label{rational_tails_simple}
Suppose $X$ satisfies property $(\star\star)_g$ and $n$ is sufficiently large (depending on $X$ and $g$). Let 
$$\cM_{\Gamma}\subset \barM_{g,n}(X,\beta)$$ be a locally closed boundary stratum consisting of stable maps $f:C\to X$ such that $C$ is the union of a nonsingular genus $g$ curve $C_0$ and disjoint nonsingular rational tails $R_1,\ldots,R_{n-m}$, such that $x_i\in R_{i}$ for $i=1,2,\ldots,n-m$ and $x_i\in C_0$ for $i=n-m+1,\ldots,n$. (See Figure \ref{Fig:domain_tails}.)

Suppose further that the virtual dimension of $\barM_{g,n}(X,\beta)$ is \textit{at most} the dimension of $\barM_{g,n}\times X^n$, 
\begin{equation}\label{dim_constraint_weak}
r(1-g)+\int_\beta  c_1(T_X)\le rn\, ,
\end{equation}
and furthermore that, if equality holds, then $n-m>0$.

Then, $\dim(\cM_\Gamma)<\dim(\barM_{g,n}\times X^n)$. In particular, $\cM_\Gamma$ fails to dominate $\barM_{g,n}\times X^n$.
\end{prop}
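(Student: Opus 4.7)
The plan is to bound $\dim \cM_\Gamma$ via a fiber-product decomposition of the stratum. Decomposing $\beta = \beta_0 + \beta_1 + \cdots + \beta_{n-m}$ (with $\beta_0$ the class on $C_0$ and $\beta_i$ the class on $R_i$) and fibering over the attaching-node evaluations gives
\[
\cM_\Gamma \,\subset\, \bigsqcup_{\vec\beta}\, \cM_{g,n}(X,\beta_0) \times_{X^{n-m}} \prod_{i=1}^{n-m} \cM_{0,2}(X,\beta_i).
\]
Because each rational tail must pass through a general $x_i\in X$, one has $\int_{\beta_i} c_1(T_X) \ge s(X)$ for every $i \ge 1$, so Lemma~\ref{unirational_expdim}(a) shows that the relevant components of $\cM_{0,2}(X,\beta_i)$ have the expected dimension $\int_{\beta_i} c_1(T_X) + r - 1$. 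For the main factor, the standard deformation-theoretic bound gives
\[
\dim_{[f_0]} \cM_{g,n}(X,\beta_0) \,\le\, r(1-g) + \int_{\beta_0} c_1(T_X) + 3g - 3 + n + h^1(C_0, f_0^*T_X).
\]
Assembling these and subtracting $(n-m)r$ for the gluing codimension,
\[
\dim \cM_\Gamma \,\le\, \dim(\barM_{g,n}\times X^n) - \Delta + h^1(C_0, f_0^*T_X) - (n-m),
\]
where $\Delta := rn - r(1-g) - \int_\beta c_1(T_X) \ge 0$ by \eqref{dim_constraint_weak}. It therefore suffices to prove $h^1(C_0, f_0^*T_X) < \Delta + (n-m)$, noting $n-m > 0$ whenever $\Delta = 0$ by hypothesis.

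To bound $h^1(C_0, f_0^*T_X)$, I will split on the size of $m$. If $m \ge 2g$: the projection of any component of $\cM_\Gamma$ onto $\barM_{g,m}\times X^m$ is dominant (when $\cM_\Gamma$ is), so the component of $\cM_{g,m}(X,\beta_0)$ containing $[f_0]$ dominates $X^m$ under evaluation at the $m$ original markings on $C_0$; Proposition~\ref{H1_vanishing}, now applied with $m \ge 2g$ markings, yields $h^1(C_0, f_0^*T_X) = 0$ at the generic such $[f_0]$, and the dimension estimate then closes directly using $n-m\geq 0$ (with strict inequality when $\Delta=0$). If $m < 2g$: then $m$ is bounded and $n - m > n - 2g$ grows with $n$. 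Here I invoke $(\star\star)_g$, which provides $\delta > 0$ and a threshold $N_0$ such that $h^1(C_0, f_0^*T_X) < \int_{\beta_0} c_1(T_X)/(r - s(X) + \delta)$ when $\int_{\beta_0} c_1(T_X) \ge N_0$, and $h^1(C_0, f_0^*T_X)$ is uniformly bounded by a constant $K_0$ otherwise (the bounded-degree locus producing only finitely many curve classes in the Fano setting). Substituting $\int_{\beta_0} c_1(T_X) \le r(n+g-1) - (n-m)s(X)$ into the estimate and simplifying, the required inequality holds for $n$ sufficiently large.

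The main obstacle will be rigorously controlling components of $\cM_{g,n}(X,\beta_0)$ whose evaluation at the $m$ original markings is \emph{not} dominant onto $X^m$, since on such components $h^1(C_0, f_0^*T_X)$ may be large and is not controlled by Proposition~\ref{H1_vanishing}. The resolution will be that any such component contributes to $\cM_\Gamma$ only through maps whose image in $X^n$ lies in a proper subvariety, so that the dimensional contribution is strictly bounded; combining this incidence restriction with the uniform bound from $(\star\star)_g$ gives the required dimension estimate in this residual regime as well.
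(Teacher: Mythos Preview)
Your proposal is correct and follows essentially the same approach as the paper: the same fiber-product decomposition of $\cM_\Gamma$, the same case split on whether the spine carries enough markings (the paper writes $m\ge\max(2g,1)$ rather than $m\ge 2g$ to catch the $g=0,\,m=0$ edge case), and the same use of Proposition~\ref{H1_vanishing} for the large-$m$ case and property $(\star\star)_g$ for the bounded-$m$ case. Your final paragraph even flags a subtlety about non-dominant spine components that the paper's proof leaves implicit.
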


\begin{figure}[!htb]
     \includegraphics[width=.75\linewidth]{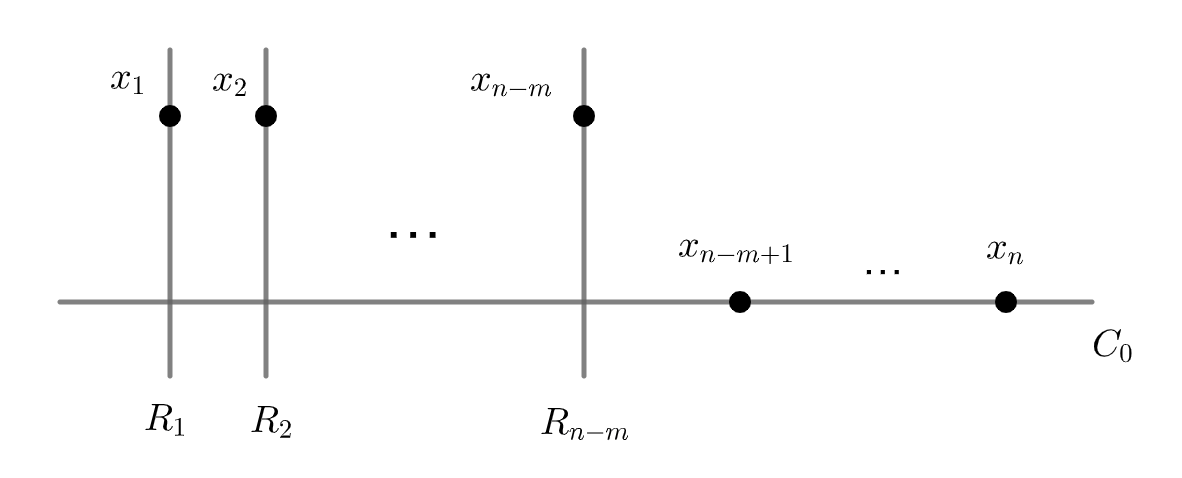}
          \caption{Domain of a stable map given by a nonsingular $C_0$ attached to disjoint nonsingular rational tails}
          \label{Fig:domain_tails}
\end{figure}

\begin{proof}
If $n-m=0$ and $n\ge g+1$, then $\cM_\Gamma=\cM_{g,n}(X,\beta)$ has expected dimension by Proposition \ref{H1_vanishing}, so we obtain the claim.

If $m\ge g+1$ and $n-m>0$, then $\cM_{g,n}(X,f_{*}[C_0])$ has expected dimension by the argument of Proposition \ref{H1_vanishing}. Moreover, $\cM_{0,2}(X,f_{*}[R_i])$ has expected dimension by Lemma \ref{unirational_expdim}(a), and any incidence condition on the nodal point of $R_i$ imposes the expected number of conditions. Thus, the boundary stratum $\cM_\Gamma$ has expected dimension, which is strictly less than that of $\cM_{g,n}(X,\beta)$, so we are again done.

Assume now that $m<g+1$, so in particular $m$ is bounded above by a constant. Applying Lemma \ref{unirational_expdim}(a) again, we find
\begin{equation*}
\dim(\cM_{\Gamma})\le\dim(\barM_{g,n}\times X^n)+h^1(C_0,T_X)-n+O(1)\, .
\end{equation*}
where the term $O(1)$ denotes a constant upper bound depending only on $g$ and $X$.

Furthermore, we have
\begin{align*}
\int_{[C_0]} c_1(T_X)&\le \int_\beta c_1(T_X)-n\cdot s(X)\\
&\le (r-s(X))n+O(1)
\end{align*}
by \eqref{dim_constraint_weak}, where again the term $O(1)$ depends only on $g$ and $X$ and not on $C,f,\beta$. Since $X$ satisfies property $(\star\star)_g$, we conclude $\dim(\cM_\Gamma)<\dim(\barM_{g,n}\times X^n)$ for $n$ sufficiently large, as desired.
\end{proof}

Under stronger assumptions as in Proposition \ref{rational_tails_simple}, we now rule out stable maps at the boundary in a general fiber of $\tau$ with arbitrary topology.

\begin{prop}\label{rational_tails_trees}
Suppose $X$ satisfies property $(\star\star)_g$ and $n$ is sufficiently large (depending on $X$ and $g$). Let
$$\cM_\Gamma\subset \barM_{g,n}(X,\beta)$$
be a locally closed boundary stratum consisting of stable maps $f:C\to X$ such that $C$ is the union of a nonsingular genus $g$ curve $C_0$, disjoint trees of nonsingular rational curves $T_1,\ldots,T_{n-m},T'_1,T'_2,\ldots,T'_k$, such that $x_i\in T_{i}$ for $i=1,2,\ldots,n-m$ and $x_i\in C_0$ for $i=n-m+1,\ldots,n$. (See Figure \ref{Fig:domain_trees}.)

Suppose the virtual dimension of $\barM_{g,n}(X,\beta)$ is equal to the dimension of $\barM_{g,n}\times X^n$,
\begin{equation*}
r(1-g)+\int_\beta c_1(T_X)= rn\, .
\end{equation*}
Assume further that at least one of the following two conditions hold:
\begin{enumerate}
\item[(i)] $f^{*}T_X$ is globally generated for every $f:\bP^1\to X$, 
\item[(ii)] $s(X)+t(X)\ge r+1$.
\end{enumerate}
Then, $\cM_{\Gamma}$ fails to dominate $\barM_{g,n}\times X^n$.
\end{prop}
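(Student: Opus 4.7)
The plan is to extend the dimension count of Proposition \ref{rational_tails_simple} to handle arbitrary rational trees (not just single rational tails) and the additional ``free'' trees $T'_j$ attached to $C_0$ at unmarked points. Write $\beta = \beta_0 + \sum_{i=1}^{n-m}\beta_i + \sum_{j=1}^{k}\beta'_j$ for the decomposition of the curve class according to the dual graph, let $d_0 = \int_{\beta_0} c_1(T_X)$, and let $B = \sum_i b_i + \sum_j a_j$ denote the total number of rational components, where $b_i$ and $a_j$ are the numbers of components of $T_i$ and $T'_j$. The stratum $\cM_\Gamma$ is parametrized by a fibre product over $X^{n-m+k}$ of $\cM_{g,n+k}(X,\beta_0)$ with the tree moduli $\cM_{0,2}^{\Gamma_i}(X,\beta_i)$ (rooted at the node, carrying the marked point $p_i$) and $\cM_{0,1}^{\Gamma'_j}(X,\beta'_j)$ (rooted at the node), the fibre product being taken along evaluation at the roots.

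Under hypothesis (i), Lemma \ref{unirational_expdim}(b) directly gives that every tree moduli has expected dimension and that all evaluation maps are dominant, making the fibre-product dimension count straightforward. A computation analogous to that in the proof of Proposition \ref{rational_tails_simple}, combined with the dimension-equality assumption $\mathrm{vdim}\,\barM_{g,n}(X,\beta) = \dim\barM_{g,n}\times X^n$, should yield
\[
\dim \cM_\Gamma \le \dim \barM_{g,n}\times X^n + h^1(C_0, f_0^{*}T_X) - B.
\]
Under hypothesis (ii), I expect to replace the expected-dimension control on tree moduli by a peeling argument on the dual graph: processing the leaves of each tree one at a time, I use that any rational leaf of $c_1$-degree strictly less than $s(X)$ has non-dominant evaluation at its root and hence constrains its attachment point to a proper subvariety of $X$. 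The inequality $s(X)+t(X)\ge r+1$ is precisely what ensures that every leaf either consumes at least $s(X)$ in $c_1$-degree (reducing $d_0$ enough to control $h^1$ via $(\star\star)_g$) or else contributes at least $1$ in codimension from its non-dominant evaluation, preserving the same bound $\dim\cM_\Gamma \le \dim\barM_{g,n}\times X^n + h^1(C_0, f_0^{*}T_X) - B$.

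To conclude, I combine the displayed bound with $(\star\star)_g$ and the class decomposition. Each tree carries $c_1$-degree at least $t(X)$, since a rooted rational tree with at most one external marked point and total degree zero cannot be stable; hence $d_0 \le d - (n-m+k)\,t(X) + O(1)$. Property $(\star\star)_g$ then gives $h^1(C_0, f_0^{*}T_X) \le d_0/(r-s(X)+\epsilon) + K_{g,X}$ for some fixed $\epsilon>0$ and all sufficiently large $d_0$. Using $d=rn+r(g-1)$ and $B\ge n-m+k$, a short arithmetic check shows $h^1-B<0$ once $n$ is sufficiently large, so $\cM_\Gamma$ fails to dominate $\barM_{g,n}\times X^n$.

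The hardest step will be executing the peeling argument rigorously under hypothesis (ii), since we lack the direct expected-dimension statement of Lemma \ref{unirational_expdim}(b); the induction must track simultaneously the remaining $c_1$-degree, the dimension of the image subvariety in $X$, and the combinatorics of the residual tree as leaves are removed. A second subtle point is the edge case where $n-m+k$ is small (most marked points lie on $C_0$, with only a few free trees of potentially large class): there the deficit $B-h^1$ must be absorbed by internal node contributions to $B$ rather than by growth of $n-m+k$, for which one uses that a rational tree of high $c_1$-degree attached at a single general point must be forced to be reducible once its degree exceeds the bound provided by $s(X)$, and each additional internal node contributes $+1$ to $B$.
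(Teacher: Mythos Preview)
For hypothesis (i) your approach coincides with the paper's: both use Lemma \ref{unirational_expdim}(b) to see that every tree stratum has the expected dimension, after which the dimension count of Proposition \ref{rational_tails_simple} goes through verbatim with the number of tails replaced by the number of nodes.

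For hypothesis (ii) the paper takes a much shorter route than your peeling argument. Rather than bound $\dim\cM_\Gamma$ directly, the paper \emph{deletes} from $C$ all of the free trees $T'_j$ and all of the $T_i$ having more than one component (say there are $\ell$ of the latter), together with the marked points $x_1,\ldots,x_\ell$. What remains is a curve $\widehat{C}$ of exactly the shape treated in Proposition \ref{rational_tails_simple}, now with $n-\ell$ marked points. Each deleted multi-component $T_i$ carries $c_1$-degree at least $s(X)+t(X)\ge r+1$ (the component containing $p_i$ must have dominant evaluation, contributing $\ge s(X)$, and there is at least one further non-contracted component contributing $\ge t(X)$); this gives both $\ell\le\frac{rn}{r+1}+O(1)$, so that $n-\ell$ is still large, and the strict inequality in \eqref{dim_constraint_weak} for $\widehat{C}$, so that Proposition \ref{rational_tails_simple} applies. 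No inductive analysis of the trees is needed.

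Your peeling argument, by contrast, has a real gap. For a leaf component of $c_1$-degree below $s(X)$ you correctly observe that its nodal evaluation is non-dominant, but you have \emph{no} bound on the dimension of $\cM_{0,1}(X,\beta_{\text{leaf}})$ itself: Lemma \ref{unirational_expdim}(a) yields expected dimension only when evaluation \emph{is} dominant. Thus the codimension-$1$ gain from a constrained attachment point can be wiped out by excess dimension of the leaf moduli, and your displayed inequality with the $-B$ term is not established under (ii). Separately, your proposed handling of the edge case is incorrect: a single irreducible rational curve on $X$ can have arbitrarily large $c_1$-degree, so a high-degree tree attached at one point is not ``forced to be reducible,'' and you cannot manufacture extra internal nodes this way. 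The paper sidesteps both difficulties by discarding the offending trees wholesale and absorbing the loss into a reduction of $n$.
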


\begin{figure}[!htb]
     \includegraphics[width=.75\linewidth]{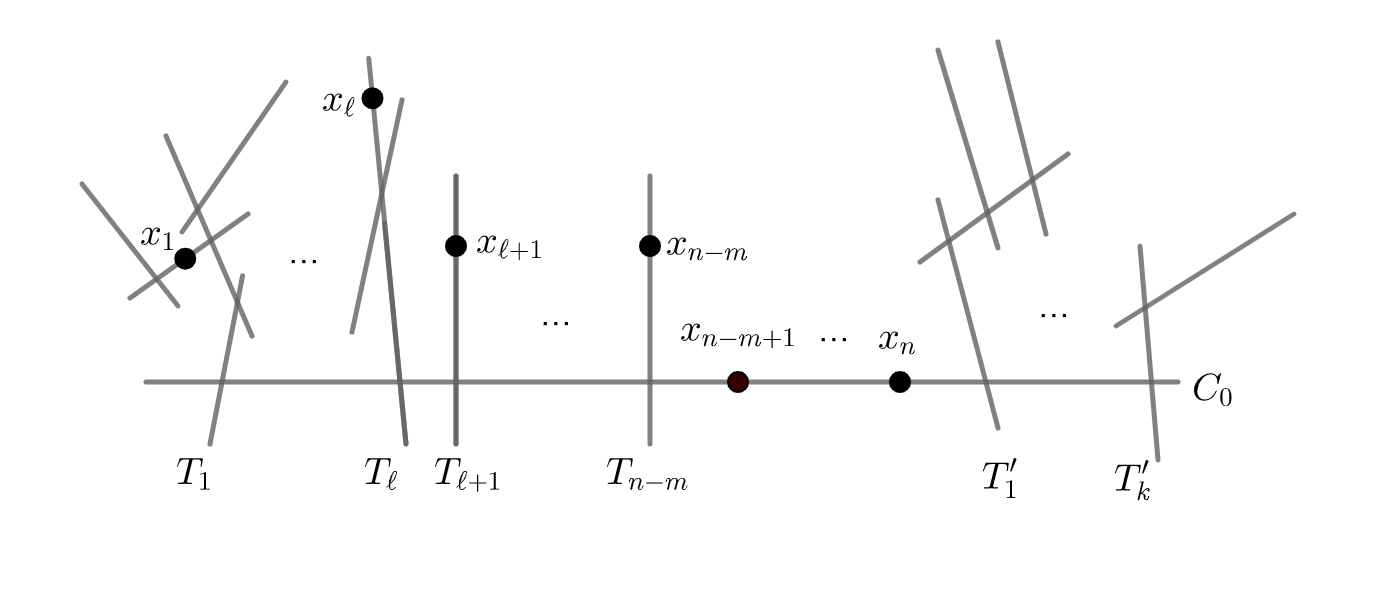}
     \caption{Arbitrary domain of a stable map whose stable contraction is nonsingular}
     \label{Fig:domain_trees}
\end{figure}

\begin{proof}
For (i), we may apply the  proof of Proposition \ref{rational_tails_simple} using the stronger Lemma \ref{unirational_expdim}(b) instead of Lemma \ref{unirational_expdim}(a) to conclude.

Consider now (ii). Without loss of generality, let $\ell$ be such that among the trees $T_1,\ldots,T_{n-m}$, those containing more than one rational curve are $T_1,\ldots,T_\ell$. 
We have
\begin{equation}\label{lowerbound_l}
\ell\le\frac{1}{(s(X)+t(X))} \int_\beta c_1(T_X)\le\frac{rn}{r+1}+O(1)\, .
\end{equation}
where the constant term $O(1)$ depends only on $g$ and $X$.

We now apply Proposition \ref{rational_tails_simple} to the stable maps $\widehat{f}:\widehat{C}\to X$ obtained by deleting $T_1,\ldots,T_\ell$, $x_1,\ldots,x_\ell$, and $T'_1,\ldots,T'_k$ from $C$. 
Since $X$ is Fano, $\int_{T_i'} c_1(T_X) \geq 0$.
We have
\begin{align*}
\int_{[\widehat{C}]} c_1(T_X) &\le \int_\beta c_1(T_X) - \ell(s(X)+t(X)) \\
&= r(n+g-1) - \ell(s(X)+t(X))\\
&< r(n-\ell+g-1)\, ,
\end{align*}
so Proposition \ref{rational_tails_simple} indeed applies. We find that when $n-\ell$ is sufficiently large (which occurs whenever $n$ is sufficiently large, by \eqref{lowerbound_l}), the space of stable maps $\widehat{C}\to X$ does not dominate $\barM_{g,n-\ell}\times X^{n-\ell}$. In particular, $\cM_\Gamma$ fails to dominate $\barM_{g,n}\times X^n$.
\end{proof}

\section{Asymptotic enumerativity}

We can now state our most general result
concerning the enumerativity of
virtual Tevelev degrees.

\begin{thm}\label{when_enum}
Suppose that $X$ satisfies property $(\star\star)_g$ and additionally that one of the following
two conditions hold:
\begin{enumerate}
\item[(i)] $f^{*}T_X$ is globally generated for every $f:\bP^1\to X$, 
\item[(ii)] $s(X)+t(X)\ge r+1$.
\end{enumerate}
Then, $\vTev^X_{g,\beta}$ is enumerative whenever $\int_\beta c_1(T_X)$ is sufficiently large (depending only on $X$ and $g$).
\end{thm}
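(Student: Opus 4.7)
The plan is to combine the results of Section 2: Proposition \ref{transversality_smooth} handles transversality on the smooth-domain locus, while Proposition \ref{rational_tails_trees} rules out boundary contributions under precisely the hypotheses stated. First, since
$$n = n(g,\beta) = 1-g+\tfrac{1}{r}\int_\beta c_1(T_X),$$
the assumption that $\int_\beta c_1(T_X)$ is large forces $n$ to be large as well. In particular, we may assume both $n\ge 2g$ and that $n$ is large enough for Proposition \ref{rational_tails_trees} to apply.

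By Proposition \ref{transversality_smooth}, over a general point $(C,p_1,\ldots,p_n,x_1,\ldots,x_n)\in\cM_{g,n}\times X^n$ the fiber of $\tau$ meets the smooth-domain locus $\cM_{g,n}(X,\beta)$ in finitely many reduced, transverse points. This verifies well-definedness of $\Tev^X_{g,\beta}$ and identifies it with the set-theoretic degree of $\tau_\cM$. Any stable map in this fiber but outside $\cM_{g,n}(X,\beta)$ must have stable contraction equal to the smooth pointed curve $(C,p_1,\ldots,p_n)$, so its domain is a smooth genus $g$ component attached to disjoint trees of rational curves, exactly the configuration addressed by Proposition \ref{rational_tails_trees}. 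Under assumption (i) or (ii), every such boundary stratum fails to dominate $\barM_{g,n}\times X^n$, so after possibly enlarging the threshold on $\int_\beta c_1(T_X)$ the general fiber lies entirely inside $\cM_{g,n}(X,\beta)$.

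At each fiber point $[f]$, Proposition \ref{H1_vanishing} gives $H^1(C,f^*T_X)=0$, so $\barM_{g,n}(X,\beta)$ is smooth of the expected dimension in a neighborhood of $[f]$ and its virtual fundamental class agrees there with the usual one. Combining this local agreement with the reducedness and transversality of each point in the general fiber, the pushforward $\tau_*[\barM_{g,n}(X,\beta)]^{vir}$ equals the set-theoretic cardinality $\Tev^X_{g,\beta}$ times $[\barM_{g,n}\times X^n]$, which proves $\vTev^X_{g,\beta}=\Tev^X_{g,\beta}$.

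The main obstacle is already subsumed into Proposition \ref{rational_tails_trees}: property $(\star\star)_g$ is what bounds the excess dimension arising from the first cohomology on the smooth genus $g$ component, while conditions (i) and (ii) are what control the proliferation of rational trees — either via global generation of $f^*T_X$ for every map from $\bP^1$, or via the counting estimate $s(X)+t(X)\ge r+1$, which bounds how many rational trees can carry nontrivial curve class.
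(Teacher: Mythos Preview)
Your argument is correct and follows the same route as the paper: translate ``$\int_\beta c_1(T_X)$ large'' into ``$n$ large,'' invoke Proposition~\ref{rational_tails_trees} to eliminate boundary contributions, and invoke Proposition~\ref{transversality_smooth} for finiteness and transversality on the smooth-domain locus. You add two clarifications the paper leaves implicit --- that a stable map over a general point of $\cM_{g,n}\times X^n$ necessarily has the topological type treated in Proposition~\ref{rational_tails_trees}, and that $H^1$-vanishing forces the virtual and usual fundamental classes to agree near the fiber so that the virtual pushforward genuinely recovers the set-theoretic count --- but the strategy is identical.
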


\begin{proof}
The requirement that $\int_\beta c_1(T_X)$ be sufficiently large is equivalent to the requirement that $n$ be sufficiently large.

Proposition \ref{rational_tails_trees} shows that the general fiber of $$\tau:\barM_{g,n}(X,\beta)\to\barM_{g,n}\times X^n$$ 
consists only of stable maps with nonsingular domains. Proposition \ref{transversality_smooth} then shows that the general fiber consists of finitely many reduced points, the number of which is equal to $\Tev^X_{g,\beta}$.
\end{proof}

\begin{proof}[Proof of Theorem \ref{thm_g/p}]
Every $X=G/P$ satisfies property $(\star)_g$ and hence property $(\star\star)_g$, and also satisfies property (i) above. The result is then immediate from Theorem \ref{when_enum}.
\end{proof}

\begin{ex}
{\em Every $X$ satisfying $s(X)\ge r+1$ satisfies $(\star\star)_g$ and (ii), so for any such $X$, virtual Tevelev degrees are enumerative for $\beta$ sufficiently large.}
\end{ex}

In order to prove Theorem \ref{thm_hypersurface}, we show that if $X$ is a complete intersection of low degree in a Fano variety $Y$ satisfying $(\star)_g$, then $X$ satisfies $(\star\star)_g$. For simplicity, we assume the Picard rank
of $Y$ is 1.

\begin{prop}\label{complete_intersection}
Let $Y$ be a nonsingular projective Fano variety of Picard rank 1 with positive generator $\cO(1)\in\Pic(Y)$. Suppose further that $Y$ satisfies property $(\star)_g$.

Let $X\subset Y$ is a nonsingular complete intersection of dimension $r$ and degree $(e_1,\ldots,e_k)$ (with the degrees computed with respect to $\cO(1)$). 
%
Suppose, for all curve classes $\beta\in H_2(X,\bZ)$, that
\begin{equation}\label{complete_intersection_ineq}
\frac{\int_{\beta} c_1(T_X)}{\int_\beta c_1(\cO(1))}>(r-s(X))\sum_{i=1}^k e_i\, .
\end{equation}
Then, $X$ satisfies property $(\star\star)_g$.
\end{prop}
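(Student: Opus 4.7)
The plan is to exploit the normal bundle short exact sequence
\[
0\to f^*T_X\to f^*T_Y\to f^*N_{X/Y}\to 0
\]
associated to a stable map $f:C\to X$ composed with the inclusion $X\hookrightarrow Y$. Taking cohomology gives
\[
h^1(C,f^*T_X)\le h^1(C,f^*T_Y)+h^0(C,f^*N_{X/Y}).
\]
Property $(\star)_g$ for $Y$ (stated uniformly over all effective curve classes of $Y$) bounds the first term by a constant $K_{g,Y}$, so everything reduces to controlling $h^0(C,f^*N_{X/Y})$.

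Setting $d=\int_\beta c_1(\cO(1))$, the pullback decomposes as $f^*N_{X/Y}=\bigoplus_{i=1}^{k} f^*\cO(e_i)$ with summands of degree $e_i d\ge 0$. The crude bound $h^0(L)\le \deg L+1$ (valid for any line bundle of non-negative degree on a smooth curve) yields
\[
h^0(C,f^*N_{X/Y})\le d\sum_{i=1}^{k}e_i + k\, .
\]
To link this to the hypothesis, I would use the Picard rank 1 assumption on $Y$ to write $c_1(T_Y)=a_1 c_1(\cO(1))$ for some positive integer $a_1$; adjunction then gives $c_1(T_X)=(a_1-\sum_i e_i)H$ with $H=c_1(\cO(1))|_X$, and hence $\int_\beta c_1(T_X)=(a_1-\sum_i e_i)d$ is linear in $d$. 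In particular the left-hand side of \eqref{complete_intersection_ineq} collapses to the $\beta$-independent quantity $(a_1-\sum_i e_i)/\sum_i e_i$, so the hypothesis is in fact a single scalar inequality.

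Combining the two estimates gives
\[
\frac{\int_\beta c_1(T_X)}{h^1(C,f^*T_X)}\ge \frac{(a_1-\sum_i e_i)\,d}{(\sum_i e_i)\,d+K_{g,Y}+k},
\]
whose limit as $d\to\infty$ equals $(a_1-\sum_i e_i)/\sum_i e_i$; since the hypothesis forces $a_1>\sum_i e_i$, the regime $d\to\infty$ is indeed equivalent to $\int_\beta c_1(T_X)\to\infty$. The hypothesis \eqref{complete_intersection_ineq} is exactly the assertion that this limit strictly exceeds $r-s(X)$, so property $(\star\star)_g$ for $X$ follows. The only conceptual step I expect to require care is the passage from $(\star)_g$ for $Y$ to a bound on $h^1(C,f^*T_Y)$ when $f$ actually factors through $X$ — but since $(\star)_g$ is uniform across all effective classes in $H_2(Y,\bZ)$, it applies to the pushforward $i_*\beta$ without modification, so no genuine obstacle arises.
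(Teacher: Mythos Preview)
Your argument is correct and essentially identical to the paper's: both use the normal bundle sequence to bound $h^1(C,f^*T_X)$ by $h^0(C,f^*N_{X/Y})+h^1(C,f^*T_Y)$, invoke $(\star)_g$ for $Y$ to absorb the second term into an $O(1)$ constant, and estimate the first by $d\sum_i e_i+O(1)$. One small slip: the left-hand side of \eqref{complete_intersection_ineq} collapses to $a_1-\sum_i e_i$, not to $(a_1-\sum_i e_i)/\sum_i e_i$; your final comparison is nonetheless correct because the hypothesis $a_1-\sum_i e_i>(r-s(X))\sum_i e_i$ is equivalent to $(a_1-\sum_i e_i)/\sum_i e_i>r-s(X)$.
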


\begin{rem}
If $\dim(X)\ge3$, the Lefschetz hyperplane theorem guarantees that the left hand side of \eqref{complete_intersection_ineq} only needs to be computed for one non-zero effective class $\beta$.
\end{rem}

\begin{proof}[Proof of Proposition \ref{complete_intersection}]
Let $C$ be nonsingular, and
let $f:C\to X$ be a stable map.
Let $i:X\to Y$ be the inclusion. From the exact sequence
\begin{equation*}
H^0(C,f^{*}N_{X/Y})\to H^1(C,f^{*}T_X)\to H^1(C,f^{*}i^{*}T_Y)\, ,
\end{equation*}
we have
\begin{align*}
h^1(C,f^{*}T_X)&\le h^0(C,f^{*}N_{X/Y})+h^1(C,f^{*}i^{*}T_Y)\\
&= h^0(C,f^{*}(\cO(e_1)\oplus\cdots\cO(e_k))) + O(1)\\
&=\left(\int_ \beta c_1(\cO(1))\right)
\sum_{i=1}^k e_i + O(1)\, ,
\end{align*}
where the constant upper bound $O(1)$ depends only on $g$ and $Y$. This implies the claim.
\end{proof}

\begin{proof}[Proof of Theorem \ref{thm_hypersurface}]
In Proposition \ref{complete_intersection}, we take $r\ge3$, $Y=\bP^{r+1}$, and $X$ to be a hypersurface of degree $e=e_1\le r$. Then, $$s(X)\geq t(X)=\int_{[L]} c_1(T_X)=r+2-e\, ,$$ where $[L]$ is the class of a line. If $r>(e+1)(e-2)$, 
then Proposition \ref{complete_intersection} applies and
 $X$ satisfies property $(\star\star)_g$. Moreover, we have $$s(X)+t(X)\geq 2(r+2-e)\ge r+1\, ,$$ so we are done by Theorem \ref{when_enum}.
\end{proof}

Hypersurfaces $X_e\subset \bP^{r+1}$ are homogeneous spaces
for $e=1$ and $2$. By Theorem \ref{thm_g/p}, the virtual Tevelev degrees
are enumerative for all curve classes of sufficiently high degree (depending upon the genus). While a direct approach to
the geometric Tevelev degrees is explained in the $e=1$ 
case in \cite{fl}, how to directly calculate the geometric Tevelev degrees
for quadrics in the asympototic range
(and to match the
the quadric formula of Theorem \ref{quadric}) is an interesting
question in projective geometry.

For cubic hypersurfaces $X_3 \subset \bP^{r+1}$, the virtual
Tevelev degrees are calculated in \cite{bp}
for all $r\geq 3$ by the simple
formula of Theorem \ref{virtualtev_hypersurface}. By Theorem \ref{thm_hypersurface}, the virtual Tevelev
degrees are enumerative for all curves classes of sufficiently
high degree (depending upon the genus) for all $r\geq 5$.

\begin{question} \label{qqq8}
Find a direct calculation of the
geometric Tevelev degrees in the asymptotic range for hypersurfaces  via the projective geometry of curves.
\end{question}

In fact, a geometric derivation of the formula of Theorem \ref{virtualtev_hypersurface} has been recently given in \cite{lian_hypersurface}, but the case of quadrics remains open.

\section{Refined results in the genus 0 case}
Our arguments yield stronger results for the
enumerativity of virtual Tevelev degrees in the genus 0 case: for certain $X$, \emph{all} well-posed 
virtual Tevelev degrees are enumerative, with no assumption on the positivity of $\beta$. The improvements will be needed
for the application to curves on hypersurfaces
in positive characteristic in Section \ref{ppp}.

Let
$X$ be a projective Fano variety of dimension $r$.
We first introduce the following more precise version of the property $(\star\star)_0$. 
\begin{defn}
We say that $X$ has property $(\star\star)'_0$ if
\begin{equation}\label{g=0_precise_condition}
    (r-s(X))\cdot h^1(\bP^1,f^{*}T_X)<r+\int_{\beta}c_1(T_X)
\end{equation}
for every  curve class $\beta$ and for every $[f:\bP^1\to X]\in M_0(X,\beta)$.
\end{defn}

\begin{rem}
Property $(\star\star)'_0$ is immediate when $s(X)\ge r$ or when $h^1(\bP^1,f^{*}T_X)=0$ for all maps $f:C\to X$ in class $\beta$.
\end{rem}

We now have the following refined version of Proposition \ref{rational_tails_simple}.

\begin{prop}\label{rational_tails_simple_g=0}
Suppose that $g=0$, and that $X$ satisfies $(\star\star)'_0$. As in Proposition \ref{rational_tails_simple}, 
let $$M_{\Gamma}\subset \overline{M}_{0,n}(X,\beta)$$ be a locally closed boundary stratum with topology as in Figure \ref{Fig:domain_tails}, where we require the spine $C_0$ now to be rational.

Suppose further that the virtual dimension of $\overline{M}_{0,n}(X,\beta)$ is at most the dimension of $\overline{M}_{0,n}\times X^n$,
\begin{equation}\label{dim_constraint_weak}
r+\int_\beta  c_1(T_X)\le rn\, ,
\end{equation}
and furthermore that, if equality holds, then $n-m>0$.

Then, $\dim(M_\Gamma)<\dim(\overline{M}_{0,n}\times X^n)$. In particular, $M_\Gamma$ fails to dominate $\overline{M}_{0,n}\times X^n$.
\end{prop}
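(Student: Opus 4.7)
The plan is to mirror the three-case analysis used in the proof of Proposition \ref{rational_tails_simple}, now with $g=0$ and spine $C_0\cong\bP^1$, but replacing the asymptotic hypothesis $(\star\star)_g$ by the pointwise refinement $(\star\star)'_0$. The inequality \eqref{dim_constraint_weak} plays the same role as the virtual-to-actual comparison in the earlier argument, and the extra ``$+r$'' appearing in $(\star\star)'_0$ is engineered precisely to absorb the strict inequality in \eqref{dim_constraint_weak} needed in the edge case $n-m=0$, removing the ``sufficiently large $n$'' slack that was available in Proposition \ref{rational_tails_simple}.

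I would split according to the number $m$ of marked points on the spine. If $n-m=0$, then all marks lie on $C_0$ and $M_\Gamma$ is the open locus in $\overline{M}_{0,n}(X,\beta)$ with smooth domain; the hypothesis forces \eqref{dim_constraint_weak} to be strict, and Proposition \ref{H1_vanishing} (which applies for $g=0$ as soon as $n\geq 1$, certainly here since $n\geq 3$ by stability) gives $H^1(\bP^1,f^{*}T_X)=0$ on every component of $M_\Gamma$ on which the evaluation is dominant, so the dimension of any such component equals the virtual dimension, which is strictly smaller than $\dim(\overline{M}_{0,n}\times X^n)$; components on which $\mathrm{ev}$ is not dominant cannot dominate $\overline{M}_{0,n}\times X^n$ for trivial reasons. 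If $m\geq 1$ and $n-m\geq 1$, the same $H^1$-vanishing applies to the spine moduli, while Lemma \ref{unirational_expdim}(a) gives the expected dimension for each tail $\cM_{0,2}(X,\beta_i)$; the standard gluing count then gives
\[\dim M_\Gamma = \mathrm{vdim}\,\overline{M}_{0,n}(X,\beta)-(n-m),\]
which is strictly less than $\dim(\overline{M}_{0,n}\times X^n)$ by \eqref{dim_constraint_weak}.

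The main work is the third case $m=0$, where the spine $C_0$ carries no marks and attaches to all $n$ tails. I use the weak excess estimate $\dim(\text{spine moduli})\leq\mathrm{vdim}+h^1(\bP^1,(f|_{C_0})^{*}T_X)$, combined with the tail contribution from Lemma \ref{unirational_expdim}(a) and the gluing count, to obtain
\[\dim M_\Gamma\leq\dim(\overline{M}_{0,n}\times X^n)+h^1\bigl(\bP^1,(f|_{C_0})^{*}T_X\bigr)-n.\]
Each of the $n$ tails contributes at least $s(X)$ to the degree, so \eqref{dim_constraint_weak} forces $\int_{\beta_0} c_1(T_X)\leq (r-s(X))n - r$, where $\beta_0=f_{*}[C_0]$. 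Applying $(\star\star)'_0$ to the spine map now yields
\[(r-s(X))\cdot h^1\bigl(\bP^1,(f|_{C_0})^{*}T_X\bigr)<r+\int_{\beta_0} c_1(T_X)\leq (r-s(X))n,\]
so $h^1<n$ in the nontrivial range $s(X)<r$, which closes the argument. The case $s(X)\geq r$ is vacuous here because $n$ tails alone would then force $\int_\beta c_1(T_X)\geq nr$, contradicting \eqref{dim_constraint_weak}. The hardest point to get right is this last chain of estimates: without the sharper hypothesis $(\star\star)'_0$, and in particular without the ``$+r$'' that exactly cancels the ``$-r$'' produced by \eqref{dim_constraint_weak}, one would obtain only an asymptotic bound and could not conclude for all $\beta$.
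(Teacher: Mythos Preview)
Your proof is correct and follows essentially the same approach as the paper's own argument: the same three-case split (which in genus $0$ becomes $n-m=0$, $m\ge1$ with $n-m\ge1$, and $m=0$), the same excess-dimension estimate via Lemma \ref{unirational_expdim}(a), the same degree bound $\int_{\beta_0}c_1(T_X)\le (r-s(X))n-r$, and the same invocation of $(\star\star)'_0$ to conclude $h^1<n$. Your explicit handling of the case $s(X)\ge r$ and of non-dominant components in the $n-m=0$ case is a bit more careful than the paper, which leaves these points implicit.
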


\begin{proof}
We employ the same proof as in Proposition \ref{rational_tails_simple}. We immediately reduce to the case $m=0$. 
By Lemma \ref{unirational_expdim}(a), we have
\begin{equation*}
\dim(M_{\Gamma})\le\dim(\overline{M}_{0,n}\times X^n)+h^1(C_0,T_X)-n.
\end{equation*}
Furthermore, we have
\begin{align*}
\int_{[C_0]} c_1(T_X)&\le \int_\beta c_1(T_X)-n\cdot s(X)\\
&\le (r-s(X))n-r
\end{align*}
by \eqref{dim_constraint_weak}.
Since $X$ satisfies property $(\star\star)'_0$, we conclude $\dim(M_\Gamma)<\dim(\overline{M}_{0,n}\times X^n)$ for \emph{all} $n$, as desired.
\end{proof}

Next, we have the following analog of Proposition \ref{rational_tails_trees}:

\begin{prop}\label{rational_tails_trees_g=0}
Suppose that $g=0$, and that $X$ satisfies property $(\star\star)'_0$.
Let
$$M_\Gamma\subset \overline{M}_{0,n}(X,\beta)$$
be any locally closed boundary stratum as in Proposition \ref{rational_tails_trees}.

Suppose the virtual dimension of $\overline{M}_{0,n}(X,\beta)$ is equal to the dimension of $\overline{M}_{0,n}\times X^n$,
\begin{equation*}
r+\int_\beta c_1(T_X)= rn\, .
\end{equation*}
Assume further that at least one the conditions (i) or (ii) of Proposition \ref{rational_tails_trees} holds.
Then, $M_{\Gamma}$ fails to dominate $\overline{M}_{0,n}\times X^n$.
\end{prop}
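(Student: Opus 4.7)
The plan is to mirror the proof of Proposition \ref{rational_tails_trees} exactly, but substitute the refined Proposition \ref{rational_tails_simple_g=0} for Proposition \ref{rational_tails_simple} throughout. Since Proposition \ref{rational_tails_simple_g=0} works for \emph{every} $n$ (no ``$n$ sufficiently large'' hypothesis is needed in the genus 0 setting, because the constant terms $O(1)$ depending on $g$ disappear), the resulting version for trees will also work for every $n$.

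Under hypothesis (i), the argument is essentially immediate: Lemma \ref{unirational_expdim}(b) shows that every boundary stratum of $\overline{M}_{0,n}(X,\beta)$ is smooth of the expected dimension, so one can run the proof of Proposition \ref{rational_tails_simple_g=0} on $M_\Gamma$ itself, bounding $\dim(M_\Gamma)$ by $\dim(\overline{M}_{0,n}\times X^n) + h^1(C_0, f^*T_X) - n$ and using $(\star\star)'_0$ to upgrade this to a strict inequality after replacing the total degree by its estimate $(r-s(X))n - r$.

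Under hypothesis (ii), the idea is to prune. As in Proposition \ref{rational_tails_trees}, let $\ell$ be the number of trees among $T_1,\dots,T_{n-m}$ with more than one rational component. Form a new stable map $\widehat f:\widehat C\to X$ by removing $T_1,\dots,T_\ell$ (and their attached marked points $x_1,\dots,x_\ell$) together with all the marked-point-free trees $T'_1,\dots,T'_k$. The resulting $\widehat C$ is the spine $C_0$ together with $n-\ell-m$ disjoint nonsingular rational tails carrying the marked points $x_{\ell+1},\dots,x_{n-m}$, with the remaining marked points $x_{n-m+1},\dots,x_n$ on $C_0$; this is exactly the topology handled by Proposition \ref{rational_tails_simple_g=0} with $n$ replaced by $n-\ell$. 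Using $s(X)+t(X)\ge r+1$ and the equality $\int_\beta c_1(T_X)=r(n-1)$, I get
\begin{equation*}
\int_{[\widehat C]}c_1(T_X)\le r(n-1)-\ell(s(X)+t(X))\le r(n-\ell-1)-\ell,
\end{equation*}
so the dimension inequality \eqref{dim_constraint_weak} of Proposition \ref{rational_tails_simple_g=0} holds, and when $\ell\ge 1$ it is strict, so the condition on equality need not be checked. If $\ell=0$ but $k\ge 1$, the Fano property of $X$ together with stability of each contracted or mapped tree $T'_j$ forces $\int_{[\widehat C]}c_1(T_X)<r(n-1)$, again giving strictness; if $\ell=k=0$ then $M_\Gamma$ already has the shape of Proposition \ref{rational_tails_simple_g=0} and we are done directly. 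In each subcase, Proposition \ref{rational_tails_simple_g=0} then implies that the stable maps $\widehat f$ fail to dominate $\overline M_{0,n-\ell}\times X^{n-\ell}$; pulling this back through the forgetful and stabilization morphisms shows $M_\Gamma$ fails to dominate $\overline M_{0,n}\times X^n$, as desired.

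The main obstacle I expect is checking the edge cases of the pruning process carefully: specifically, verifying that whenever $\ell=0$, either there is at least one $T'_j$ (in which case Fano positivity gives the needed strict inequality) or else the original stratum is already in the form covered by Proposition \ref{rational_tails_simple_g=0}. A minor bookkeeping point is ensuring $\widehat C$ is still a valid stable map after pruning, which holds because the tails removed each carried a distinguishing marked point (or were themselves the marked-point-free trees whose removal preserves stability trivially on the complement). The rest is essentially mechanical transcription of the earlier argument.
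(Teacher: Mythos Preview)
Your proposal is correct and follows exactly the paper's own approach: the paper's proof simply states that the argument of Proposition \ref{rational_tails_trees} goes through verbatim, with Proposition \ref{rational_tails_simple_g=0} replacing Proposition \ref{rational_tails_simple}, and that no bound on $\ell$ is needed since the latter holds for arbitrary $n$. Your write-up is in fact more careful than the paper's one-line proof in explicitly handling the $\ell=0$ edge cases, but the strategy is identical.
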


\begin{proof}
The proof of Proposition \ref{rational_tails_trees} goes through immediately; note that no inequality on $\ell$ is needed because Proposition \ref{rational_tails_simple_g=0} holds for $n$ arbitrary.
\end{proof}

\begin{cor}\label{when_enum_g=0}
Suppose that $g=0$, and that $X$ satisfies property $(\star\star)'_0$, If condition (i) or (ii) of Theorem \ref{when_enum} holds,
then, $\vTev^X_{0,\beta}$ is enumerative.
\end{cor}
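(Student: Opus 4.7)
The plan is to follow the structure of the proof of Theorem \ref{when_enum}, but to exploit the sharper genus $0$ results to avoid any ``$n$ sufficiently large'' hypothesis. Let $n = n(0,\beta)$, and assume $\vTev^X_{0,\beta}$ is well-posed, so that $n \geq 3$ and the dimension constraint
\[
r + \int_\beta c_1(T_X) = rn
\]
holds. We must show that a general fiber of
\[
\tau : \barM_{0,n}(X,\beta) \to \barM_{0,n} \times X^n
\]
consists of finitely many reduced points lying in the open locus $\cM_{0,n}(X,\beta)$.

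First, I would handle the boundary. Every boundary stratum of $\barM_{0,n}(X,\beta)$ has domain of the type described in Proposition \ref{rational_tails_trees_g=0}, since a genus $0$ prestable curve is a tree of rational components. Under the hypothesis that $X$ satisfies $(\star\star)'_0$ together with condition (i) or (ii), Proposition \ref{rational_tails_trees_g=0} directly gives that each such boundary stratum $\cM_\Gamma$ fails to dominate $\barM_{0,n} \times X^n$. Crucially, this conclusion holds for \emph{all} $n$, with no asymptotic assumption, because Proposition \ref{rational_tails_simple_g=0} (on which \ref{rational_tails_trees_g=0} rests) uses the sharper inequality \eqref{g=0_precise_condition} in place of the limsup definition of $(\star\star)_g$. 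Consequently, no stable map in the general fiber of $\tau$ has a reducible domain.

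Next, I would apply Proposition \ref{transversality_smooth} to the open locus $\cM_{0,n}(X,\beta)$. Since $g=0$, the hypothesis $n \geq 2g$ of that proposition is automatic, so the general fiber of $\tau$ restricted to $\cM_{0,n}(X,\beta)$ consists of finitely many maps, all of which are transverse. Combining with the previous paragraph, the general fiber of $\tau : \barM_{0,n}(X,\beta) \to \barM_{0,n} \times X^n$ is a finite reduced scheme supported in $\cM_{0,n}(X,\beta)$. By the definitions of $\vTev^X_{0,\beta}$ and $\Tev^X_{0,\beta}$, the former equals the degree of $\tau$ along a general fiber, and the latter equals the set-theoretic cardinality of this fiber; since the fiber is reduced, the two agree.

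I do not anticipate a serious obstacle here: the entire proof is a transparent specialization of the proof of Theorem \ref{when_enum}, with the ``sufficiently large $n$'' step replaced by the stronger Proposition \ref{rational_tails_trees_g=0} and the ``$n \geq 2g$'' step trivialized by $g=0$. The only point to check carefully is that the dimension constraint \eqref{dim_constraint_weak} used in Proposition \ref{rational_tails_simple_g=0} is satisfied with equality in the setting of a well-posed virtual Tevelev degree, which is immediate from the definition of $n(0,\beta)$.
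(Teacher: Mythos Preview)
Your proposal is correct and follows essentially the same route as the paper, which records the proof simply as ``Immediate from Propositions \ref{transversality_smooth} and \ref{rational_tails_trees_g=0}.'' Your write-up just unpacks this: Proposition \ref{rational_tails_trees_g=0} disposes of boundary strata for all $n$, and Proposition \ref{transversality_smooth} (with $n\ge 2g$ trivially satisfied) handles the interior. One small imprecision: not literally every boundary stratum of $\barM_{0,n}(X,\beta)$ has the shape in Proposition \ref{rational_tails_trees_g=0}; rather, every stratum lying over the open locus $\cM_{0,n}\subset\barM_{0,n}$ does (since a nonsingular stabilization forces each rational tree off $C_0$ to carry at most one marked point), and those are the only strata relevant to a general fiber.
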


\begin{proof}
Immediate from Propositions \ref{transversality_smooth} and \ref{rational_tails_trees_g=0}.
\end{proof}

\begin{cor}\label{hyp_g=0}
Suppose $X_e\subset\bP^{r+1}$ is a nonsingular hypersurface of degree $e$ and dimension
\begin{equation*}
    r>(e+1)(e-2).
\end{equation*}
Then, all genus 0 virtual Tevelev degrees $\vTev^{X_e}_{0,\beta}$ are enumerative.
\end{cor}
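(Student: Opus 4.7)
The strategy is to apply Corollary~\ref{when_enum_g=0} to $X = X_e$, which requires verifying property $(\star\star)'_0$ together with one of the conditions (i) or (ii) of Theorem~\ref{when_enum}. For $e \leq 2$, $X_e$ is homogeneous and the conclusion already follows from Theorem~\ref{thm_g/p}, so we focus on $e \geq 3$. Since $f^*T_{X_e}$ is not globally generated in general for $e \geq 3$, we target condition (ii), i.e., $s(X_e) + t(X_e) \geq r+1$.

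For condition (ii): lines contained in $X_e$ have anticanonical degree $r+2-e$, so $t(X_e) = r+2-e$ and $s(X_e) \geq t(X_e)$. The hypothesis $r > (e+2)(e-2) = e^2-4$ forces $r \geq e^2-3 \geq 2e-3$ for $e \geq 3$, hence $s(X_e) + t(X_e) \geq 2(r+2-e) \geq r+1$.

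For $(\star\star)'_0$: specialize the argument of Proposition~\ref{complete_intersection}. For $f: \bP^1 \to X_e$ of class $\beta$ with $d = \int_\beta c_1(\cO(1))$, pull back the normal bundle sequence $0 \to T_{X_e} \to T_{\bP^{r+1}}|_{X_e} \to \cO_{X_e}(e) \to 0$ to $\bP^1$. Combined with the vanishing $H^1(\bP^1, f^*T_{\bP^{r+1}}) = 0$ (from the Euler sequence, as $f^*T_{\bP^{r+1}}$ is a quotient of $\cO(d)^{r+2}$), this yields
$$h^1(\bP^1, f^*T_{X_e}) \leq h^0(\bP^1, \cO(de)) = de + 1.$$
Using $r - s(X_e) \leq e-2$ and $\int_\beta c_1(T_{X_e}) = d(r+2-e)$, the defining inequality of $(\star\star)'_0$ reduces to $(e-2)(de+1) < r + d(r+2-e)$ for every $d \geq 1$ (the $d=0$ case being trivial as $h^1 = 0$). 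Rearranging gives $r > (e-2) \cdot \tfrac{d(e+1)+1}{d+1}$, whose right-hand side is increasing in $d$ with supremum $(e+1)(e-2)$. Thus the hypothesis $r > (e+2)(e-2) > (e+1)(e-2)$ delivers the strict inequality for every $d \geq 1$.

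The main obstacle is the uniformity over \emph{all} positive curve classes $\beta$---not merely asymptotically large ones---forcing the use of the precise property $(\star\star)'_0$ rather than the liminf variant $(\star\star)_0$. Once the $h^1$ bound via the ambient projective space is in place, the remaining verification is a direct numerical comparison, and the hypothesis $r > (e+2)(e-2)$ provides enough slack to absorb both parts of the argument with room to spare.
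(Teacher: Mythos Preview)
Your proof is correct and follows essentially the same route as the paper: verify condition (ii) as in the proof of Theorem~\ref{thm_hypersurface}, then establish $(\star\star)'_0$ by sharpening the $h^1$ bound of Proposition~\ref{complete_intersection} to $h^1(\bP^1,f^*T_{X_e})\le de+1$ in genus $0$, and conclude via Corollary~\ref{when_enum_g=0}. One small correction: your appeal to Theorem~\ref{thm_g/p} for $e\le 2$ is not quite right, since that theorem only gives enumerativity for $\beta$ sufficiently large; either invoke the unobstructedness of $\overline{M}_{0,n}(G/P,\beta)$ directly, or simply note that your own argument already covers $e=2$ (the inequality $e^2-3\ge 2e-3$ holds there) and $e=1$ (where $r-s(X_1)<0$ makes $(\star\star)'_0$ vacuous).
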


\begin{proof}
We follow the proofs of Proposition \ref{complete_intersection} and Theorem \ref{thm_hypersurface}. If $g=0$ and $Y=\bP^{r+1}$ (more generally, if $Y=G/P$), then the $O(1)$ term in the upper bound on $h^1(C,f^{*}T_X)$ goes away:
$$h^1(C,f^{*}T_X) \leq \left(\int_ \beta c_1(\cO(1))\right) e +1 \, .$$
We find that if $r>(e+1)(e-2)$, then $X_e$ satisfies $(\star\star)'_0$, so we conclude by Corollary \ref{when_enum_g=0}.
\end{proof}


\section{Very free rational curves in characteristic $p$}\label{very_free_sec}
Let $k$ be an algebraically closed field of arbitrary characteristic. Let $X$ be a nonsingular projective variety over $k$.
A morphism $$f:\bP^1\to X$$ is a \textbf{very free rational curve} on $X$ if $f^{*}T_X$ is ample. The variety $X$ is \textbf{separably rationally connected} if it has a very free rational curve. The condition is equivalent by \cite[Theorem 3.7]{kollar_book} to the existence of a curve class $\beta$ on $X$ such that the evaluation map $$\ev:M_{0,2}(X,\beta)\to X\times X$$ is dominant and separable on at least one component of the space $M_{0,2}(X,\beta)$.
In characteristic zero, the separability of $\ev$ is immediate. It is known that Fano varieties are (separably) rationally connected in characteristic zero \cite{kmm}, and are conjectured to be so in arbitrary characteristic.

Our results on the enumerativity of Tevelev degrees in characteristic zero imply the existence of very free curves on certain Fano hypersurfaces in characteristic $p$.

\begin{thm}\label{very_free_thm}
Let $k$ be an algebraically closed field of characteristic $p>0$, and let $X_e\subset\bP^{r+1}_{k}$ be a nonsingular hypersurface of degree $e\ge3$. Fix integers $d,n\ge3$ satisfying
\begin{equation*}
    d=(n-1)\cdot\frac{r}{r+2-e}.
\end{equation*}
Assume that:
\begin{enumerate}
    \item $r>(e+1)(e-2)$, and
    \item $p>e$.
\end{enumerate}
Then, $X_e$ contains a very free rational curve of degree at most $d$ (where the degree is measured in the ambient projective space). In particular, $X_e$ is separably rationally connected.
\end{thm}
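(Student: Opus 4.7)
The plan is to reduce the existence of a very free rational curve on $X_e$ in characteristic $p$ to a nonvanishing statement on the virtual Tevelev degree, using the characteristic-zero enumerativity of Corollary \ref{hyp_g=0}. Set $n := 1 + d(r+2-e)/r$, which by hypothesis is an integer $\geq 3$ and is the well-posed number of markings for $\vTev^{X_e}_{0,d}$. First I would lift $X_e$ to a smooth family $\mathcal{X}\to B = \Spec W(k)$ of degree-$e$ hypersurfaces in $\bP^{r+1}_{B}$, so that the generic fiber $\mathcal{X}_K$ is a hypersurface over a field of characteristic zero. Because $r > (e+2)(e-2)$, Corollary \ref{hyp_g=0} applies to $\mathcal{X}_K$ and Theorem \ref{virtualtev_hypersurface} computes the (now enumerative) virtual Tevelev degree as
\[
N := ((e-1)!)^{n}\cdot e^{(d-n)e+1},
\]
which is nonzero modulo $p$ because $p > e$.

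The next step is to observe that every enumerative Tevelev point is automatically very free. At a general reduced fiber point $[f\colon \bP^1 \to \mathcal{X}_K]$ of $\tau_K$, étaleness forces the tangent space $H^0(\bP^1,\, f^*T_{\mathcal{X}_K}(-p_1-\cdots-p_n))$ to vanish; together with $\deg f^*T_{\mathcal{X}_K} = d(r+2-e) = r(n-1)$, any Grothendieck decomposition $f^*T_{\mathcal{X}_K} \cong \bigoplus_j \cO(a_j)$ must have $a_j = n-1 \geq 2$ for every $j$, so $f^*T_{\mathcal{X}_K}$ is ample. To transfer this conclusion to the special fiber, I would invoke deformation invariance of Gromov-Witten invariants, giving $\vTev^{X_e}_{0,d} = N$ in characteristic $p$ as well. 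Then, choosing an enumerative dominant component $\mathcal{W}_0 \subset \overline{M}_{0,n}(\mathcal{X}_K, d)$ with smooth-domain very-free general point, I would take its reduced scheme-theoretic closure $\overline{\mathcal{W}_0}$ in $\overline{M}_{0,n}(\mathcal{X}/B, d)$ (automatically $B$-flat, being integral and dominant over $B$) and pass to its special fiber $W_0$. By flatness $\dim W_0 = \dim \mathcal{W}_0 = \dim(\overline{M}_{0,n} \times X_e^n)$, so $\tau_s|_{W_0}$ is surjective onto the target; accounting virtual multiplicities across the dominant components of $\overline{M}_{0,n}(X_e, d)$, the total mod-$p$ virtual weight is $N \neq 0$, so some such component $W_0$ carries a scheme-theoretic degree coprime to $p$, making $\tau_s|_{W_0}$ generically separable and its general geometric fiber reduced.

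The main obstacle is then to show that the reduced point $[f\colon C \to X_e]$ in this fiber has smooth domain $C \cong \bP^1$; once this is established, the summand computation above, performed in characteristic $p$, yields $f^*T_{X_e}\cong \cO(n-1)^{\oplus r}$ and hence a very free curve of degree $d$. To rule out boundary contributions I would adapt Propositions \ref{rational_tails_simple_g=0} and \ref{rational_tails_trees_g=0}. Property $(\star\star)'_0$ for $X_e$ is numerical, and remains valid in characteristic $p$ under $r > (e+2)(e-2)$; condition (ii), $s(X_e)+t(X_e) = 2(r+2-e) \geq r+1$, likewise holds under $e \geq 3$. The only ingredient of those proofs that implicitly uses characteristic zero is the appeal to Lemma \ref{unirational_expdim}(a) to compute $\dim M_{0,2}(X_e,\beta)$; in characteristic $p$ this must be replaced by the elementary bound
\[
\dim_{[g]} M_{0,2}(X_e,\beta) \leq (\text{expected dim}) + h^1(\bP^1, g^*T_{X_e}),
\]
with $h^1$ controlled directly by $(\star\star)'_0$. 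This estimate suffices to ensure that boundary strata in characteristic $p$ fail to dominate $\overline{M}_{0,n} \times X_e^n$, so the special fiber $W_0$ must meet the smooth-domain locus, producing the desired very free rational curve on $X_e$ and establishing separable rational connectedness.
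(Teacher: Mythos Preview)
Your setup matches the paper's: lift $X_e$ over a DVR to characteristic zero, then use Corollary~\ref{hyp_g=0} and Theorem~\ref{virtualtev_hypersurface} to see that $\tau_{\overline K}$ has degree $N=((e-1)!)^n\, e^{(d-n)e+1}$, prime to $p$ since $p>e$. The paper, however, finishes in one sentence by quoting \cite[Corollary~3.3]{STZ}, a general mixed-characteristic specialization result that converts ``degree of $h$ prime to $p$'' directly into the existence of a very free curve on the special fiber (and is also the source of the ``degree at most $d$'' rather than ``degree $d$'' in the statement). You instead try to carry out that specialization step by hand, and this is where the genuine gaps lie.

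First, you appeal to ``deformation invariance of Gromov--Witten invariants'' to transport $\vTev$ across a mixed-characteristic family and then speak of ``virtual multiplicities'' of components of $\overline{M}_{0,n}(X_e,d)$ in characteristic $p$. This is not a citable theorem: virtual fundamental classes and their deformation invariance are developed in characteristic zero, not over a DVR with positive-characteristic residue field. What actually survives specialization is the \emph{length} of a general fiber of $\tau_R$, via an elementary argument on $0$-cycles, and the argument must proceed from there without anything virtual. Second, your characteristic-$p$ boundary analysis is incomplete. In the proofs of Propositions~\ref{rational_tails_simple_g=0} and~\ref{rational_tails_trees_g=0}, Lemma~\ref{unirational_expdim}(a) is applied to every rational \emph{tail} $R_i$ (to assert that its moduli has expected dimension and that the nodal incidence imposes $r$ conditions), not only to the spine $C_0$; replacing it by the crude bound $\dim\le\expdim+h^1$ on each tail introduces an extra $\sum_i h^1(R_i,f^*T_{X_e})$ that $(\star\star)'_0$ alone does not immediately absorb, and the gluing count also relies on generic smoothness of the evaluation maps. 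Making all of this precise, together with the separability argument for the limit component, is essentially the content of the result from \cite{STZ} that the paper simply cites.
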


On a \emph{general} hypersurface $X_e\subset\bP^{r+1}_{k}$, very free curves of degree $r+1$ were constructed by Zhu \cite{zhu} without assuming $(i)$ or $(ii)$. Very free curves on general complete intersections were similarly constructed by Chen-Zhu in \cite{cz}. When $\gcd(r+2-e,r)>1$, our result gives very free curves of lower degree for \emph{arbitrary} hypersurfaces satisfying assumptions $(i)$ and $(ii)$.

The separable rational connectivity for \emph{arbitrary} Fano hypersurfaces $X_e$ (and more generally, for arbitrary Fano complete intersections) was proven only assuming $(ii)$ when $e<r+1$, and with an additional divisibility condition when $e=r+1$, by Starr-Tian-Zong in \cite{STZ}, but there the very free curves constructed are of unspecified degree.

\begin{proof}[Proof of Theorem \ref{very_free_thm}]
Let $R$ be a discrete valuation ring with fraction field $K$ of characteristic 0 and residue field isomorphic to $k$. Let $\mathfrak{X}\subset\bP^{r+1}_R$ be a smooth hypersurface of degree $e$ over $\Spec(R)$ with special fiber isomorphic to $X_e$.

Let $\overline{M}_{0,n}(\mathfrak{X},d[L])_R$ be the relative moduli space (over $R$) of stable maps of degree $d$ (as computed against the hyperplane class) in $\mathfrak{X}$, and let $$\pi_R:\cC_{0,n}(\mathfrak{X},d[L])_R\to \overline{M}_{0,n}(\mathfrak{X},d[L])_R$$ be the universal family. Let $$h_R=\tau_R:\overline{M}_{0,n}(\mathfrak{X},d[L])_R\to(\overline{M}_{0,n})_R\times \mathfrak{X}^n$$ be the forgetful map.

Combining Theorem \ref{virtualtev_hypersurface} and Corollary \ref{hyp_g=0}, the degree of the map (in characteristic 0)
$$h_{\overline{K}}:\overline{M}_{0,n}(X_{\overline{K}},d[L])\to \overline{M}_{0,n}\times X_{\overline{K}}^n$$ is equal to $$\vTev^{X_{\overline{K}}}_{0,d} = ((e-1)!)^{n} \cdot  e^{(d-n)e+1},$$ which in particular is not divisible by $p$.

Applying \cite[Corollary 3.3]{STZ}, we conclude that the special fiber $\overline{M}_{0,n}\times X^n$ has a free curve $\bP^1_k\to \overline{M}_{0,n}\times X^n$, and upon projection we obtain a very free curve of degree at most $d$ on $X$.
\end{proof}

\end{document}